\setlist{itemsep=0pt, topsep=0pt}
\newtheorem{theorem}{Theorem}
\newtheorem{thm}{}[section]
\newtheorem{lemma}[thm]{Lemma}
\newtheorem{conjecture}[theorem]{Conjecture}
\newtheorem{corollary}[thm]{Corollary}
\newtheorem{observation}[thm]{Observation}
\newtheorem{proposition}[thm]{Proposition}
\numberwithin{subcase}{case}
\theoremstyle{definition}
\newtheorem{definition}[thm]{Definition}
\DeclareMathOperator{\ESO}{ESO}
\DeclareMathOperator{\ES}{ES}
\renewcommand{\P}{\mathcal{P}}
\newcommand{\C}{\mathcal{C}}
\newcommand{\B}{\mathcal{B}}
\newcommand{\A}{\mathcal{A}}
\newcommand{\U}{\mathcal{U}}
\newcommand{\V}{\mathcal{V}}
\newcommand{\N}{\mathcal{N}}
\newcommand{\W}{\mathcal{W}}
\renewcommand{\S}{\mathcal{S}}
\renewcommand{\L}{\mathcal{L}}
\newcommand{\R}{\mathcal{R}}
\DeclareMathOperator{\NE}{NE}
\DeclareMathOperator{\NW}{NW}
\DeclareMathOperator{\SE}{SE}
\DeclareMathOperator{\SW}{SW}
\DeclareMathOperator*{\argmax}{argmax}
\DeclareMathOperator*{\argmin}{argmin}
\renewcommand{\t}{\texttt{t}}
\newcommand{\cupdot}{\mathbin{\mathaccent\cdot\cup}}
\begin{document}

\title{Erd\H{o}s-Szekeres On-Line}

\author{Kirk Boyer\thanks{Department of Mathematics, University of Denver, Denver, CO 80208; {\tt kirk.boyer@du.edu, lauren.nelsen@du.edu}.} \and Lauren M. Nelsen\footnotemark[1] \and Luke L. Nelsen\thanks{Department of Mathematical and Statistical Sciences, University of Colorado Denver, Denver, CO 80217; {\tt luke.nelsen@ucdenver.edu, florian.pfender@ucdenver.edu}.} \and Florian Pfender\footnotemark[2] \and Elizabeth Reiland\thanks{Department of Applied Mathematics and Statistics, Johns Hopkins University, Baltimore, MD 21218; {\tt ereiland@jhu.edu}.} \and Ryan Solava\thanks{Department of Mathematics, Vanderbilt University, Nashville, TN 37240; {\tt ryan.w.solava@vanderbilt.edu}.}}

\maketitle

\begin{abstract}
In 1935, Erd\H{o}s and Szekeres proved that $(m-1)(k-1)+1$ is the minimum number of points in the plane which definitely contain an increasing subset of $m$ points or a decreasing subset of $k$ points (as ordered by their $x$-coordinates).
We consider their result from an on-line game perspective:
Let points be determined one by one by player A first determining the $x$-coordinate and then player B determining the $y$-coordinate.
What is the minimum number of points such that player A can force an increasing subset of $m$ points or a decreasing subset of $k$ points?
We introduce this as the \textit{Erd\H{o}s-Szekeres on-line number} and denote it by $\text{ESO}(m,k)$.
We observe that $\text{ESO}(m,k) < (m-1)(k-1)+1$ for $m,k \ge 3$, provide a general lower bound for $\text{ESO}(m,k)$, and determine $\text{ESO}(m,3)$ up to an additive constant.
\end{abstract}

\section{Introduction}

In \cite{ErdosSzekeres1935}, Erd\H{o}s and Szekeres proved that $(m-1)(k-1)+1$ is the minimum number of points in the plane (ordered by their $x$-coordinates) that guarantees an increasing (in terms of $y$-coordinates) subset of $m$ points or a decreasing subset of $k$ points.
We refer to this number as the \textit{Erd\H{o}s-Szekeres number} and denote it by $\ES(m,k)$.
Their theorem has since seen several proofs, as well as related results with random or algorithmic themes (see \cite{Steele1995}).

``On-line'' refers to a process in which an entire structure is not known, and instead decisions must be made with limited information.
On-line graph coloring was most notably developed in \cite{GyarfasLehel1988} by Gyarf\'as and Lehel; for more developments which have sprung from this topic, we refer the reader to \cite{Kierstead1998,Paulusma2016}.
We consider the question of Erd\H{o}s and Szekeres in an on-line setting with the following game:
Let points be determined one by one with player A first determining the $x$-coordinate and then player B determining the $y$-coordinate.
The question we want to answer is the following.
What is the minimum number of points such that player A can force an increasing subset of $m$ points or a decreasing subset of $k$ points?
We refer to this number as the \textit{Erd\H{o}s-Szekeres on-line number} and denote it by $\ESO(m,k)$.

In Section~\ref{sec:prelim} we introduce necessary definitions, a table of small results, and prove the following weak but general upper bound:

\begin{theorem}\label{thm:ESO<ES}
$\ESO(m,k) \le (m-1)(k-1)$ for all $m,k \ge 3$.
\end{theorem}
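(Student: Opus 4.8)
The plan is to reduce everything to forcing a single ``pivot'' point and then spending exactly one extra move. Call a point $Q$ of the current configuration a \emph{backward pivot} if some increasing subsequence of length $m-1$ and some decreasing subsequence of length $k-1$ both \emph{end} at $Q$ (equivalently, $Q$ carries Erd\H{o}s--Szekeres label $(m-1,k-1)$), and a \emph{forward pivot} if an increasing subsequence of length $m-1$ and a decreasing subsequence of length $k-1$ both \emph{start} at $Q$. First I would observe that one additional point finishes the game once such a pivot exists: if $Q$ is a backward pivot, player A places the next point immediately to the right of $Q$ in the $x$-order, and whatever $y$-coordinate B chooses, the new point either lies above $Q$ (lengthening the increasing subsequence ending at $Q$ to $m$) or below $Q$ (lengthening the decreasing subsequence to $k$). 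Symmetrically, if $Q$ is a forward pivot, A places the next point immediately to its left, and B is again forced to complete an increasing subsequence of length $m$ or a decreasing subsequence of length $k$.

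Given this reduction, the strategy I would have A follow is: append points at the far right (each new $x$-coordinate larger than all previous ones) until either a forbidden pattern appears or $(m-1)(k-1)-1$ points have been played. If a pattern appears during this phase, A has already won using fewer than $(m-1)(k-1)$ points. Otherwise A has produced a pattern-free sequence $\sigma$ of length exactly $(m-1)(k-1)-1$, and it remains to show that $\sigma$ must contain a forward or backward pivot; A then wins with one further move by the reduction, for a total of $(m-1)(k-1)$ points. Since B is adaptive throughout the appending phase, the key claim must hold for \emph{every} pattern-free sequence of this length.

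The combinatorial heart, and the step I expect to be the main obstacle, is therefore: every sequence of $(m-1)(k-1)-1$ distinct reals with no increasing subsequence of length $m$ and no decreasing subsequence of length $k$ contains a forward or backward pivot. I would attack it through the Erd\H{o}s--Szekeres labeling. Assigning to each point its backward label $(a_i,b_i)$, the lengths of the longest increasing and longest decreasing subsequences ending at it, these labels are distinct and lie in the grid $\{1,\dots,m-1\}\times\{1,\dots,k-1\}$, so with $(m-1)(k-1)-1$ points exactly one grid cell is unused; if that cell is not the corner $(m-1,k-1)$, a backward pivot exists. The symmetric argument with the forward labels $(\alpha_i,\beta_i)$ (longest increasing and decreasing subsequences \emph{starting} at the point), which are likewise distinct and grid-valued, produces a forward pivot unless the corner is again the missing cell. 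Thus everything reduces to showing that the corner $(m-1,k-1)$ cannot be the unique missing cell of \emph{both} labelings simultaneously.

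For this last step I would pass to the RSK correspondence: such a $\sigma$ has insertion-tableau shape equal to the rectangle with $m-1$ columns and $k-1$ rows with its bottom-right cell removed (the only Young diagram of that size meeting the row and column bounds), and I would argue from this forced shape that the corner label is realized by some point in at least one of the two orientations, exploiting the relations $a_i+\alpha_i\le m$ and $b_i+\beta_i\le k$ at the candidate points. As a consistency check, this is exactly where the hypothesis $m,k\ge 3$ enters: for $m=2$ the length-$(k-2)$ sequence is strictly decreasing and admits no pivot at all, matching the fact that no saving over $\ES(m,k)$ is possible in that regime. Converting the tableau argument into a short self-contained proof, ideally replacing RSK by a direct manipulation of the two labelings, is the part I expect to demand the most care.
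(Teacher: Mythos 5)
Your overall architecture matches the paper's: let $(m-1)(k-1)-1$ points be played (the paper does not even need a specific strategy for this phase, since the argument works from an arbitrary pattern-free configuration of that size), observe that the Erd\H{o}s--Szekeres labels are distinct and therefore occupy all of $[m-1]\times[k-1]$ except one cell, and finish in one move once some point carries the corner label $(m-1,k-1)$ in one of the two orientations. Your pivot-plus-one-move reduction and the ``exactly one missing cell'' step are exactly the paper's. The divergence --- and the genuine gap --- is in the case where the missing cell is the corner. You correctly reduce this to the claim that $(m-1,k-1)$ cannot be the unique missing cell of \emph{both} the forward and the backward labelings, and that claim is true, but your proposed proof of it via RSK is only a sketch: knowing that the insertion shape is the $(k-1)\times(m-1)$ rectangle minus its inner corner is a global invariant of the sequence and does not by itself locate a single point realizing the corner label in one of the two orientations, and the inequalities $a_i+\alpha_i\le m$, $b_i+\beta_i\le k$ are not applied to any identified point. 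You flag this yourself as the step demanding the most care; as written it is not a proof.

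The paper closes this case with a short direct argument that you could substitute for the RSK step. Suppose the corner is the missing \emph{forward} label. The points with empty north-east quadrant are exactly those whose forward label has first coordinate $1$; since only $(m-1,k-1)$ is missing, all of $(1,1),\dots,(1,k-1)$ occur, so there are $k-1$ such points, no two of them form a $2$-up-run, and hence they form a $(k-1)$-down-run whose right-most element is the unique point labeled $(1,1)$ (necessarily the right-most point of the whole configuration). Symmetrically, the points with empty south-east quadrant form an $(m-1)$-up-run ending at that same point. Thus the right-most point is a backward pivot and player A wins by playing to its right. This fills your gap and in fact proves the stronger statement that when the forward labeling misses the corner, the backward labeling attains it at the last point.
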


If both players are playing uniformly at random, it is not difficult to see that the game for $m=k$ typically ends after about $\frac12m^2$ turns. Considering random play often gives good intuition for bounds in deterministic play, and this random intuition would suggest that the bound in Theorem~\ref{thm:ESO<ES} is off by a factor $2$.

We establish such a lower bound for the Erd\H{o}s-Szekeres on-line number in Section~\ref{sec:mkLB}:

\begin{theorem}\label{thm:mkLB}
$\ESO(m,k) \ge \lfloor\frac{k}{2}\rfloor (m-k+5) -3$ for $m \ge k \ge 4$.
\end{theorem}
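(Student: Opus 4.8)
The plan is to prove the lower bound by exhibiting an explicit strategy for player B (the player choosing $y$-coordinates) that keeps the configuration free of an increasing subset of $m$ points and of a decreasing subset of $k$ points for at least $\lfloor k/2\rfloor(m-k+5)-4$ turns. Since keeping the configuration clean through $N$ turns shows that A cannot force the desired subsets with $N$ points, this yields $\ESO(m,k)\ge\lfloor k/2\rfloor(m-k+5)-3$. Throughout I track, for each placed point $p$, the quantities $a(p)$ and $d(p)$: the lengths of the longest increasing and longest decreasing subsequence ending at $p$. Two standard facts drive the argument. First, the map $p\mapsto(a(p),d(p))$ is injective, so the number of points never exceeds the number of label pairs actually used. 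Second, when A fixes the horizontal rank of a new point $p$ and B sweeps its $y$-coordinate upward, $a(p)$ is non-decreasing while $d(p)$ is non-increasing, so B may realize any pair on the resulting staircase of achievable labels.

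B's strategy is to confine every label to a box $R$ of the form $\{(a,d):1\le a\le m-k+5,\ 1\le d\le\lfloor k/2\rfloor\}$ (with a handful of corner pairs excluded, which accounts for the $-3$), realized geometrically as maintaining the points as $\lfloor k/2\rfloor$ increasing chains arranged in a decreasing staircase, with chain $i+1$ lying entirely below and to the right of chain $i$. Any configuration all of whose labels lie in $R$ has longest increasing subsequence below $m$ and longest decreasing subsequence at most $\lfloor k/2\rfloor<k$, hence is clean; by injectivity it has at most $|R|\approx\lfloor k/2\rfloor(m-k+5)$ points, which is exactly the target count, the small cases $k\in\{4,5\}$ being absorbed by the additive constant. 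The savings over the trivial $(m-1)(k-1)$ come precisely from capping $d$ at $\lfloor k/2\rfloor$ and sacrificing the top $\approx k$ increasing labels, both of which I expect to be forced by the on-line nature of the game.

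The heart of the proof is the claim that, for any clean configuration whose labels lie in $R$ and any horizontal rank chosen by A, B can place the new point $p$ so that (i) $p$'s own label lands in $R$ and (ii) no point to the right of $p$ has its label pushed out of $R$. For (i), I would place $p$ just above every point to its left carrying the maximal decreasing label $\lfloor k/2\rfloor$; this forces $d(p)\le\lfloor k/2\rfloor$, and the resulting value of $a(p)$ is controlled by the increasing labels of the left points lying below that height, which the invariant bounds. It is exactly this estimate that pins down the constant $m-k+5$ and the off-by-small slack in the additive terms. For (ii), I must use the staircase structure: assigning $p$ to the correct chain at the correct height limits which right-neighbors can have their increasing subsequences extended through $p$, keeping their labels inside $R$.

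The step I expect to be the main obstacle is establishing this claim uniformly over A's moves, in particular the adversarial ``sliver'' insertions in which A repeatedly inserts between the same two consecutive points and thereby drives the increasing/decreasing trade-off to its limit; here one must show that the staircase never needs more than $\lfloor k/2\rfloor$ chains and that extending one chain never lengthens a competing chain past its budget. Pinning the additive constants down exactly — including the parity correction between even and odd $k$ that distinguishes $2\lfloor k/2\rfloor$ from $k-1$, together with the off-by-one losses in the estimate for $a(p)$ — is the remaining technical burden, but it affects only lower-order terms and not the leading behavior $\lfloor k/2\rfloor(m-k)$.
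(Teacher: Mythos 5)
Your plan is a genuinely different route from the paper's (which reduces to a restricted ``tier'' game and uses a horizontal-separation argument), but it contains a gap that I do not believe can be repaired in this form. The entire weight of your argument rests on the claim that player B can keep every label $(a(p),d(p))$ inside the box $R=[m-k+5]\times[\lfloor k/2\rfloor]$ for roughly $|R|$ turns. Maintaining that invariant is, by definition, nothing other than playing the game $A_{m-k+6,\,\lfloor k/2\rfloor+1}$ in the role of player B: a label escapes $R$ exactly when an $(m-k+6)$-up-run or a $(\lfloor k/2\rfloor+1)$-down-run appears. Player A can therefore force the invariant to break within $\ESO(m-k+6,\lfloor k/2\rfloor+1)$ turns, and this quantity is far smaller than your target $\lfloor k/2\rfloor(m-k+5)-4$. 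Already for $k=5$ your box is $[m]\times[2]$, so preserving it means surviving the game $A_{m+1,3}$, which by Section~\ref{sec:m3UB} player A wins in $m+O(m^{1/3})$ turns --- about half of the $2m-4$ turns you need (and Conjecture~\ref{con} predicts the analogous shortfall for every $k$). Once a label leaves $R$ the original game need not be over, but your argument says nothing about how B continues, so this cannot be absorbed into additive constants. The same failure is visible in your geometric realization: if the $\lfloor k/2\rfloor$ chains occupy disjoint $x$-intervals in a decreasing staircase, player A simply plays every $x$-value inside the interval of a single chain; B has no second chain available there, and any placement off that chain either destroys the staircase or starts a growing stack of deflected points in which the game merely recurses. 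This is exactly the ``sliver insertion'' obstacle you flag, and it is fatal rather than technical. (A smaller slip: for $k\in\{4,5\}$ one has $m-k+5\ge m$, so labels in $R$ do not even preclude an $m$-up-run.)

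The mechanism the paper uses to get the factor $\lfloor k/2\rfloor$ is worth contrasting with yours. Player B restricts to $k-1$ horizontal tiers, so no $k$-down-run can ever form and no cap of $\lfloor k/2\rfloor$ on down-run labels is needed; the invariant is not membership of labels in a box but a \emph{separation} condition: any two points in the same tier must be separated horizontally by at least $\lfloor k/2\rfloor-1$ other points. This is always maintainable, since on any turn at most $2\left(\lfloor k/2\rfloor-1\right)\le k-2$ of the $k-1$ tiers are forbidden, and --- crucially --- same-tier points may interleave arbitrarily in $x$ with points of other tiers, so A cannot profitably hammer one region. An $m$-up-run can change tiers at most $k-2$ times, hence contains at least $m-k+1$ consecutive same-tier pairs, each forcing $\lfloor k/2\rfloor-1$ extra separating points; this yields $\lfloor k/2\rfloor(m-k+1)+k-1$, and a boundary refinement (avoiding low tiers at the far left and high tiers at the far right) upgrades it to $\lfloor k/2\rfloor(m-k+5)-3$. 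If you want to keep a label-based formulation, the invariant to aim for is this separation property rather than confinement of $(a(p),d(p))$ to a fixed box.
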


Notice the unusual behavior of this bound depending on the parity of $k$.
We achieve this lower bound by considering a related game which restricts the choices for player B. For this related game, we show that the leading term in Theorem~\ref{thm:mkLB} is correct for every fixed $k$, and the dependence on the parity of $k$ can be seen in the proof. In fact, we conjecture that this leading term is correct for the original game as well.

\begin{conjecture}\label{con}
For all $m\ge k$, we have $\ESO(m,k)=\lfloor\frac{k}{2}\rfloor m + O(k^2)+o(mk)$.
\end{conjecture}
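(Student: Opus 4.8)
The plan is to establish the two directions of the claimed equality separately, since the conjecture asserts a leading term $\lfloor k/2\rfloor m$ together with a two-sided error of size $O(k^2)+o(mk)$. The lower bound is essentially already available: for $m\ge k\ge 4$, Theorem~\ref{thm:mkLB} gives $\ESO(m,k)\ge \lfloor\frac{k}{2}\rfloor(m-k+5)-3 = \lfloor\frac{k}{2}\rfloor m - O(k^2)$, and the deficit $-O(k^2)$ is absorbed into the conjectured error. The small-$k$ cases ($k\le 3$) would be handled separately, the case $k=3$ by the paper's determination of $\ESO(m,3)$ up to an additive constant and $k\le 2$ by trivialities. Thus the entire content of the conjecture reduces to the matching \emph{upper} bound: a strategy for player A that forces an increasing $m$-set or a decreasing $k$-set within $\lfloor\frac{k}{2}\rfloor m + O(k^2) + o(mk)$ points. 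Since Theorem~\ref{thm:ESO<ES} only yields $(m-1)(k-1)\approx mk$, the task is to save the factor of roughly $2$ predicted by the random-play heuristic of $\approx\frac12 mk$ moves.

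For the upper bound I would work in the poset model, where the points placed so far carry the order $p\preceq q$ iff $p$ is weakly left of and below $q$: then an increasing $m$-set is a chain of length $m$ and a decreasing $k$-set is an antichain of size $k$. By Dilworth, player B survives exactly while the configuration is coverable by $k-1$ chains, each of length at most $m-1$, which is what makes $(m-1)(k-1)$ the naive grid count. To save the factor of $2$, player A should charge each unit of the antichain budget for about two points of chain progress. The mechanism I would pursue is insertion-based double threats: instead of always appending on the right (which cedes all control to B and realizes the full grid), player A inserts each new point at an $x$-coordinate chosen so that, whatever height B assigns, the point either advances some chain toward length $m$ or forces two of the $k-1$ covering chains to advance simultaneously. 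Grouping the $k-1$ antichain levels into $\lceil (k-1)/2\rceil=\lfloor\frac{k}{2}\rfloor$ pairs, each pair behaves as a single efficient track of length $\approx m$; the parity of $k$ enters precisely as the identity $\lceil(k-1)/2\rceil=\lfloor\frac{k}{2}\rfloor$ dictates, with an unpaired level for even $k$ producing the floor and contributing to the $O(k^2)$ correction.

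The main obstacle is that none of the restricted-game analysis behind Theorem~\ref{thm:mkLB} can be reused for this direction. Restricting B's choices only weakens B, so it yields a lower bound on $\ESO(m,k)$; the upper bound must instead withstand a fully adaptive B who may place each height anywhere among the reals, and B will try to answer every insertion so that it advances only one chain, wasting A's move and dragging the count back toward $mk$. I would attempt an amortized potential argument: assign each configuration a weight that provably drops by about $2$ per inserted point until a length-$m$ chain or size-$k$ antichain appears, and argue that B cannot simultaneously block both the chain-length and antichain-size thresholds while keeping the weight from falling. The hard part will be showing that the pairing survives B's best responses uniformly as $m,k\to\infty$ with $m\ge k$, so that the accumulated slack stays within $O(k^2)+o(mk)$; controlling the boundary and end-of-track effects of the pairing is exactly what I expect to generate the $O(k^2)$ additive term, and closing this gap is what currently separates the proven bounds from the conjectured value.
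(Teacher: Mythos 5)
The statement you are asked to prove is labeled a \emph{conjecture} in the paper, and the paper itself offers no proof of it; it only supplies supporting evidence, namely the lower bound of Theorem~\ref{thm:mkLB} (via the restricted game $B_{m,k}$) and the resolution of the first nontrivial case $k=3$ in Theorem~\ref{thm:m3}. Your reduction of the problem is sound as far as it goes: the lower bound $\ESO(m,k)\ge\lfloor\frac{k}{2}\rfloor(m-k+5)-3=\lfloor\frac{k}{2}\rfloor m-O(k^2)$ does follow from Theorem~\ref{thm:mkLB} for $k\ge 4$, the cases $k\le 3$ are covered by the trivial values and Theorem~\ref{thm:m3}, and you are right that the restricted-game machinery cannot be recycled for the upper bound, since a strategy for the tier-restricted player B transfers to $A_{m,k}$ (giving $\ESO(m,k)\ge B(m,k)$) while a strategy for player A against a tier-restricted opponent does not.

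The genuine gap is that the entire upper-bound direction --- a strategy for player A forcing the game to end within $\lfloor\frac{k}{2}\rfloor m+O(k^2)+o(mk)$ turns --- is asserted but not proved, and this is precisely the open content of the conjecture. Your ``insertion-based double threats'' mechanism is a heuristic, not an argument: you never specify the insertion rule, never define the potential function whose decrease by ``about $2$ per point'' would drive the count, and never show that player B cannot respond to each insertion by advancing only a single covering chain. The Dilworth reformulation (survival iff the poset is covered by $k-1$ chains of length at most $m-1$) is just a restatement of the Erd\H{o}s--Szekeres bound and confers no on-line leverage by itself; the pairing of antichain levels into $\lfloor\frac{k}{2}\rfloor$ tracks is exactly the structure that makes the bound \emph{plausible} (it mirrors the strategy in Proposition~\ref{prop:BgameUB} for the restricted game, where player A splits the tiers at $\frac{k}{2}$), but transplanting it to the unrestricted game is where the difficulty lives, and you acknowledge yourself that closing this step is ``what currently separates the proven bounds from the conjectured value.'' As written, the proposal is a research plan for an open problem, not a proof.
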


For the values $k=1$ and $k=2$, it is trivial to determine $\ESO(m,k)$. The first interesting value is $k=3$. We find evidence for our conjecture by providing strategies for player A (Section~\ref{sec:m3UB}) and player B (Section~\ref{sec:m3LB}), obtaining the following result.

\begin{theorem}\label{thm:m3} 
$\ESO(m,3) = m + (6m)^{\frac13} + O(1).$
Specifically,
\[
m + (6m)^{\frac13} - 2 < \ESO(m,3) < m + (6m)^{\frac13} + 3.
\]
\end{theorem}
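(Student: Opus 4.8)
The plan is to prove the two inequalities separately, matching strategies for the two players so that both bounds agree up to the additive constant. Throughout both parts I use the standard reformulation of the $k=3$ constraint: a point set (ordered by $x$) contains no decreasing subsequence of length $3$ if and only if it is $321$-avoiding, which in turn holds if and only if it decomposes into two increasing chains --- the left-to-right maxima (the \emph{main chain}) and the remaining points (the \emph{secondary chain}), the latter being forced to be increasing as well. Player A wins as soon as one of B's two obstructions fails: either the longest increasing subsequence, whose length I call $L$, reaches $m$, or B is forced to create a decreasing triple. So the entire $k=3$ game is a race in which B must hold $L\le m-1$ while keeping the configuration $321$-avoiding, and the only quantity that matters is how many \emph{deflections} --- points that do not advance $L$ --- player B can afford.

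For the upper bound (Section~\ref{sec:m3UB}) I would have A maintain a main increasing chain and repeatedly try to extend it by inserting a new point at a chosen $x$-coordinate just beyond the current top of the chain. If B answers high, the main chain lengthens and A has made progress toward $L=m$; if B answers low, B has spent a deflection and created a descent beneath the chain. A then \emph{presses} inside that descent: because a decreasing triple is forbidden, every further low answer in this region must lie above the previous one, so A can force the deflection points themselves to climb, and by choosing insertion $x$-coordinates carefully A makes these climbing points simultaneously usable inside a long increasing subsequence elsewhere. The bookkeeping is organized by the \emph{depth} of nesting of deflections: I expect to show that a deflection nested at depth $j$ can be sustained only if the main chain already carries on the order of $\binom{j}{2}$ points to support it without immediately yielding either forbidden pattern. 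Summing these support costs, $t$ deflections of increasing depth require about $\sum_{j=1}^{t}\binom{j}{2}=\binom{t+1}{3}$ main-chain points, so once the main chain has the roughly $m$ points needed for $L=m$, B can have deflected at most $t$ times with $\binom{t+1}{3}\approx m$, that is $t\approx(6m)^{1/3}$. This yields the total point count $m+(6m)^{1/3}+O(1)$ and hence the claimed upper bound.

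For the lower bound (Section~\ref{sec:m3LB}) I would design B's strategy to realize exactly this extremal trade-off. B keeps one long increasing main chain, allowed to grow to length $m-1$ but never to $m$, together with a short, \emph{nested} secondary chain of deflection points. For each $x$-coordinate A announces, B selects a $y$-coordinate by the rule: advance the main chain only when A's insertion leaves no safe low slot, and otherwise place a deflection point in the deepest safe slot, always keeping the secondary chain increasing so that no decreasing triple is ever created. The content of the argument is to verify that this rule always has a legal response and that a depth-$j$ deflection can be accommodated using only about $\binom{j}{2}$ supporting main-chain points, so that B genuinely survives until the main chain reaches $m-1$ and $\binom{t+1}{3}\approx m$ deflections have been made, for a total of $m+(6m)^{1/3}-O(1)$ surviving points.

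The main obstacle in both directions is the same: controlling the adversary's adaptivity in the interleaving of $x$-coordinates while keeping the support-cost count tight enough to pin down not only the exponent $1/3$ but the constant $6$ and the $O(1)$ error. Concretely, the hardest step is the nesting estimate --- that a deflection at depth $j$ costs on the order of $\binom{j}{2}$ main-chain points --- which must be proved as a \emph{lower} bound on the cost for the upper-bound (player A) argument and as an \emph{upper} bound on the cost for the lower-bound (player B) argument. Getting these two estimates to meet within an additive constant, and then summing $\sum_{j=1}^{t}\binom{j}{2}=\binom{t+1}{3}=\tfrac{1}{6}t^{3}+O(t^{2})$ to invert $t\approx(6m)^{1/3}$, is where the delicate casework will live.
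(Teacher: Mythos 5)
Your plan has the right skeleton --- the two-increasing-chain structure of $321$-avoiding configurations, the game as a race over ``deflections,'' and the arithmetic in which the $j$-th deflection carries a quadratic cost so that $t$ deflections cost $\sum_j\binom{j}{2}\sim t^3/6$ and $t\approx(6m)^{1/3}$ --- and this matches the paper's accounting (the middling thresholds $f_i\approx\tfrac12(T-i)^2$ in Corollary~\ref{cor:m3UB} and the placements $z_j=\tfrac{(w-j)(w-j+1)}{2}+1$ in the $w$-Fracturing Strategy). But the proposal is a plan rather than a proof: the one step you yourself flag as the hard one, the ``nesting estimate,'' is exactly where all the content lives, and it is not a single lemma one can hope to prove in isolation. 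On the upper-bound side, your dichotomy ``B answers high $\Rightarrow$ progress, B answers low $\Rightarrow$ press inside the descent'' breaks down when B deflects near an end of the current chain, where there are too few chain points on one side to press against; the paper needs an entire second strategy mode (the barb, Lemma~\ref{lemma:m3barbbound}), an explicit transition (Steps~2--3 of the $f$-Combined Strategy), and a balance condition ($i+\lfloor\sqrt{2f_i}\rfloor\lesssim T$, Condition~2 of Lemma~\ref{lemma:m3UB}) between the cost of continuing to press and the cost of cashing out via the barb. That balance, not the bare sum $\binom{t+1}{3}\approx m$, is what pins down the constant $6$ and the additive error; without it you cannot rule out that B trades a cheap shallow deflection for an expensive escape.

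On the lower-bound side there is a more serious omission. The danger for B is not merely keeping the main chain and the secondary chain each increasing (which your rule does address); it is preventing an $m$-up-run that \emph{mixes} main-chain points with deflection points. ``Place a deflection point in the deepest safe slot'' does not control this: a deflection point sitting just barely off the main chain can be spliced into it, so B gains nothing from the deflection. The paper's $w$-Fracturing Strategy places the $j$-th deflection point exactly $w-j+1$ notches off the diagonal precisely so that any up-run through it must forfeit at least $w-j+1$ main-chain points, and Case~1 of Proposition~\ref{prop:lowerbound} then needs an additional argument (the points $p_1,\dots,p_{j-1}$ separating distinct wastebins) to collect $w$ forfeited points in total. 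You would also need to verify that B's rule always has a legal response and never creates a $3$-down-run across different wastebins (parts (5) and (6) of Lemma~\ref{lem:fracturing}); your sketch assumes this. So: right approach and right asymptotic bookkeeping, but the two strategies and the two cost estimates that make the bounds meet within an additive constant are missing, and filling them in is essentially the whole of Sections~\ref{sec:m3UB} and~\ref{sec:m3LB}.
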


Finally, we mention some variations of this on-line game in Section~\ref{sec:conclusion}.

\section{Preliminary Definitions and Observations}\label{sec:prelim}

We begin with a formal definition of the \textit{Erd\H{o}s-Szekeres on-line number}.

\begin{definition}\textit{$x(p)$, $y(p)$, Up-runs and down-runs.}\\
Given a point $p \in \mathbb{R}^2$, we denote the $x$-coordinate of $p$ by $x(p)$ and the $y$-coordinate by $y(p)$.
When we denote a set of points by $p_1, \dots, p_n$, we assume that $x(p_i)<x(p_{i+1})$ for all $1\le i\le n-1$.
Consider a set of points $\C = \{p_1,\ldots,p_n\}$.
If $y(p_i)\le y(p_{i+1})$ for all $1\le i\le n-1$, then we say that $\C$ is an \textit{$n$-up-run}, or simply an \textit{up-run}.
If $y(p_i)> y(p_{i+1})$ for all $1\le i\le n-1$, then we say that $\C$ is an \textit{$n$-down-run}, or simply a \textit{down-run}.
\end{definition}

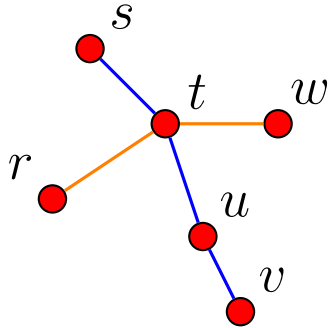
\begin{figure}[h!]
\center
\begin{adjustbox}{max totalsize={6cm}{6cm},center,padding=1em}
\begin{tikzpicture}

\begin{scope}[every node/.style={circle,thick,draw,shade,shading=axis,left color=red,right color=red,shading angle=90}]
    \node (1) at (0,1.5) {};
    \node (2) at (.5,3.5) {};
    \node (3) at (1.5,2.5) {};
    \node (4) at (2,1) {};
    \node (5) at (2.5,0) {};
    \node (6) at (3,2.5) {};
\end{scope}

\node at ([shift={(135:.6)}]1) {\huge $r$};
\node at ([shift={(45:.6)}]2) {\huge $s$};
\node at ([shift={(45:.6)}]3) {\huge $t$};
\node at ([shift={(45:.6)}]4) {\huge $u$};
\node at ([shift={(45:.6)}]5) {\huge $v$};
\node at ([shift={(45:.6)}]6) {\huge $w$};

\begin{scope}[-,every edge/.style={draw=orange,very thick}]
    \path [-] (1) edge node {} (3);
    \path [-] (3) edge node {} (6);
\end{scope}

\begin{scope}[-,every edge/.style={draw=blue,very thick}]
    \path [-] (2) edge node {} (3);
    \path [-] (3) edge node {} (4);
    \path [-] (4) edge node {} (5);
\end{scope}

\end{tikzpicture}
\end{adjustbox}
\caption{The set $\{r,t,w\}$ is a 3-up-run, and the set $\{s,t,u,v\}$ is a 4-down-run.}
\end{figure}

\begin{definition}\textit{The Erd\H{o}s-Szekeres on-line game, $A_{m,k}$.}\\
Let $m,k \ge 1$.
In each step of the game $A_{m,k}$, player A chooses a value $\hat{x} \in (0,1)$ and then player B chooses a value $\hat{y} \in (0,1)$, forming a point $(\hat{x},\hat{y})$ in $(0,1) \times (0,1)$.
The game ends when after some turn there is either an $m$-up-run or a $k$-down-run.
Let player A have the objective of ending the game in the fewest number of turns, and player B in the greatest number of turns.
We denote by $\ESO(m,k)$ the number of turns a game of $A_{m,k}$ will take when both players play optimally, which we call the \textit{Erd\H{o}s-Szekeres on-line number}.
\end{definition}

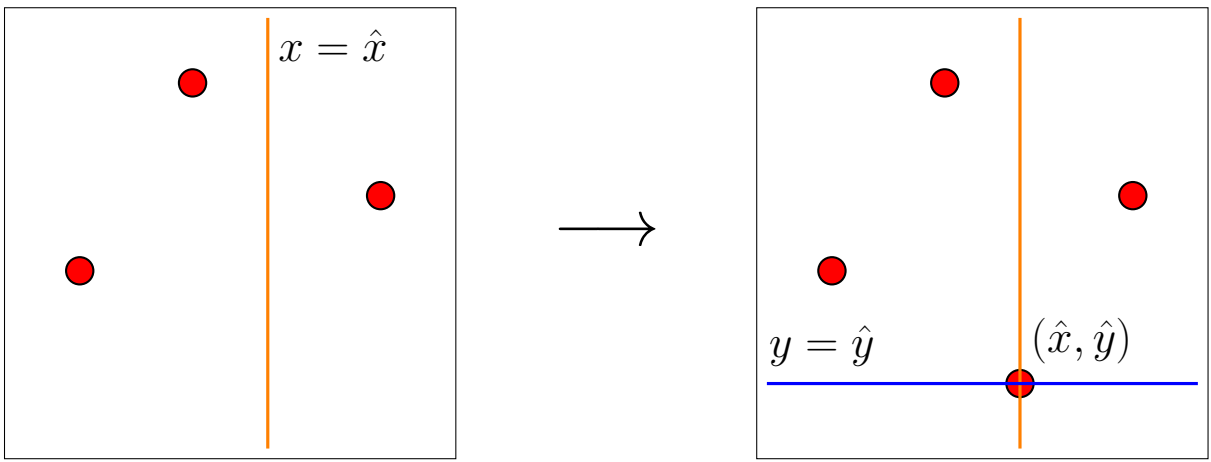
\begin{figure}[h!]
\center
\begin{tikzpicture}

	\draw (-3,-3) rectangle (3,3);
    \node (1) at (0.5,-3) {};
    \node (2) at (0.5,3) {};
	\node at ([shift={(-30:1)}]2) {\LARGE $x = \hat{x}$};
    
    \node at (5,0) {\Huge $\longrightarrow$};

	\draw (7,-3) rectangle (13,3);
    \node (11) at (10.5,-3) {};
    \node (12) at (10.5,3) {};
    \node (13) at (7,-2) {};
    \node (14) at (13,-2) {};
	\node at ([shift={(30:1)}]13) {\LARGE $y = \hat{y}$};
    
\begin{scope}[every node/.style={circle,thick,draw,shade,shading=axis,left color=red,right color=red,shading angle=90}]
    \node (5) at (-.5,2) {};
    \node (6) at (-2,-.5) {};
    \node (7) at (2,.5) {};
    
    \node (15) at (9.5,2) {};
    \node (16) at (8,-.5) {};
    \node (17) at (12,.5) {};
    \node (18) at (10.5,-2) {};
\end{scope}

	\node at ([shift={(35:1)}]18) {\LARGE $(\hat{x},\hat{y})$};

\begin{scope}[-,
              every edge/.style={draw=orange,very thick}]
    \path [-] (1) edge node {} (2);
    \path [-] (11) edge node {} (12);
\end{scope}

\begin{scope}[-,
              every edge/.style={draw=blue,very thick}]
    \path [-] (13) edge node {} (14);
\end{scope}
\end{tikzpicture}

\caption{First, player A chooses $\hat{x}$, and then player B chooses $\hat{y}$.
Together this constitutes a turn in which the point $(\hat{x},\hat{y})$ was played.}
\end{figure}

Clearly, $\ESO(m,k) \ge \min\{m,k\}$.
Note that in an optimal strategy, neither player needs to repeat a previously played $x$- or $y$-value; when providing a strategy for one player, we will assume that the opposing player does not repeat a value already chosen.
By reflecting any instance of $A_{m,k}$ about the line $x = \frac{1}{2}$, we also see that $\ESO(m,k) = \ESO(k,m)$.
Hence we assume that $m \ge k$ unless otherwise noted.

Observe that, after the first turn, player B always has a choice which will not increase the length of a longest up-run.
To see this, without loss of generality assume that there is a point $p$ immediately to the right of the $x$-value player A has chosen.
Then player B can choose $y(p)+\varepsilon$ for a sufficiently small, positive $\varepsilon$ such that $p$ and the new point are interchangeable in any up-run containing one of them.
Similarly, player B always has a choice which will not increase the length of a longest down-run.
Thus $\ESO(m,k)$ is strictly increasing in both $m$ and $k$ when $m,k \ge 2$.

We also have $\ESO(m,k) \le \ES(m,k)$ and thus by the theorem of Erd\H{o}s and Szekeres \cite{ErdosSzekeres1935} we know that $\ESO(m,k) \le (m-1)(k-1)+1$.
It is clear that $\ESO(m,k) = \ES(m,k)$ when $k \in \{1,2\}$, but on the other hand it is easy (and perhaps a fun exercise for the reader) to show that $\ESO(3,3) = 4 < \ES(3,3) = 5$.
In fact, equality holds only for $k \le 2$.
We prove this after some preliminary definitions:

\begin{definition}\textit{Quadrants of a point.}\\
Let $\P$ be a point set in $(0,1) \times (0,1)$ and let $p' \in \P$.
Then the \textit{north-east quadrant of $p'$} is ~$\NE(p')  = \{ p \in \P : y(p') < y(p) ~ \& ~ x(p') < x(p) \}$.
The \textit{north-west quadrant}, \textit{south-west quadrant} and \textit{south-east quadrant of $p'$} are defined similarly and are denoted by $\NW(p')$, $\SW(p')$ and $\SE(p')$, respectively.
\end{definition}

\begin{figure}[h!]
\center
\begin{tikzpicture}

	\draw (0,0) rectangle (7,7);
    \node (11) at (0,4) {};
    \node (12) at (7,4) {};
    \node (13) at (4,0) {};
    \node (14) at (4,7) {};
    
\begin{scope}[every node/.style={circle,thick,draw,shade,shading=axis,left color=red,right color=red,shading angle=90}]
    \node (1) at (1,1) {};
    \node (2) at (2,5) {};
    \node (3) at (3,6) {};
    \node (4) at (4,4) {};
    \node (5) at (5,3) {};
    \node (6) at (6,2) {};
\end{scope}

	\node at ([shift={(40:.6)}]1) {\LARGE $p_1$};
	\node at ([shift={(40:.6)}]2) {\LARGE $p_2$};
	\node at ([shift={(40:.6)}]3) {\LARGE $p_3$};
	\node at ([shift={(40:.6)}]4) {\LARGE $p_4$};
	\node at ([shift={(40:.6)}]5) {\LARGE $p_5$};
	\node at ([shift={(40:.6)}]6) {\LARGE $p_6$};
	
	\node at (0,0)[shift={(35:.45)}] {SW};
	\node at (7,0)[shift={(155:.45)}] {SE};
	\node at (0,7)[shift={(-35:.45)}] {NW};
	\node at (7,7)[shift={(-155:.45)}] {NE};

\begin{scope}[-,
              every edge/.style={draw=gray,very thick}]
    \path [-] (11) edge node {} (12);
    \path [-] (13) edge node {} (14);
\end{scope}

\end{tikzpicture}
\caption{$\NE(p_4) = \varnothing$, $\NW(p_4) = \{p_2,p_3\}$, $\SW(p_4) = \{p_1\}$, and $\SE(p_4) = \{p_5,p_6\}$.}
\end{figure}

With these definitions, we are now ready to prove Theorem~\ref{thm:ESO<ES}.
Our proof is an adaptation of Seidenberg's proof that $\ES(n,n) \le (n-1)^2 +1$ (see \cite{Steele1995}).

\begin{proof}
Assume $\P$ is a set of $(m-1)(k-1)-1$ points that have already been played.
If $\P$ contains an $m$-up-run or a $k$-down-run, then we are done; so suppose otherwise.

Label each point $p \in \P$ with $(i,j)$, where $i$ is the length of the longest up-run in $\P$ with $p$ as its left-most point and $j$ is the length of the longest down-run in $\P$ with $p$ as its left-most point.
Since there are no $m$-up-runs and no $k$-down-runs, the set of labels of $\P$ is a subset of $[m-1] \times [k-1]$.

Observe that each of the labels are distinct as for any points $p,q$ with $x(p)<x(q)$, either the first coordinate of $q$'s label is larger than the first coordinate of $p$'s label or the second coordinate of $q$'s label is larger than the second coordinate of $p$'s label.
Hence only one element from $[m-1] \times [k-1]$ is missing as a label.

\textbf{Case 1:}
The missing label is not $(m-1,k-1)$.
Then there is some point $q$ that is the left-most point of both an $(m-1)$-up-run and a $(k-1)$-down-run.
If player A plays to the left of $q$, then player B will choose a $y$-value that results in either an $m$-up-run or a $k$-down-run.

\textbf{Case 2:}
The missing label is $(m-1,k-1)$.
Then there is some point $q$ that is labeled with $(1,1)$.
Now let $\P^\uparrow = \{ p \in \P : \NE(p) = \varnothing \}$.
Observe that $\P^\uparrow$ is the set of points whose labels have ``1'' in the first coordinate.
Since $(m-1,k-1)$ is the only missing label, the set of labels of points in $\P^\uparrow$ is $\{ (1,j) \}_{j=1}^{k-1}$.
Hence $|\P^\uparrow|=k-1$.
Also observe that no two points from $\P^\uparrow$ form a 2-up-run and thus $\P^\uparrow$ is a $(k-1)$-down-run with $q$ as the right-most point.
Making similar observations about the set $\P_\downarrow = \{ p \in \P : \SE(p) = \varnothing \}$ shows that $\P_\downarrow$ is an $(m-1)$-up-run with $q$ as the right-most point.
If player A plays to the right of $q$, then player B will choose a $y$-value that results in either an $m$-up-run or a $k$-down-run.
\end{proof}

Although Theorem~\ref{thm:ESO<ES} establishes that this on-line number of the Erd\H{o}s-Szekeres problem is distinct from their classical result when $m,k \ge 3$, it does not provide a precise sense of what $\ESO(m,k)$ is in general.
In addition to the degenerate cases, we list some small results obtained via dynamic programming\footnote{The Python code used to obtain these results in Table~\ref{tab:smallresults} can be viewed online at \href{http://math.ucdenver.edu/~nelsenl/projects/ErdosSzekeresOnline}{math.ucdenver.edu/\~{}nelsenl/projects/ErdosSzekeresOnline}.} in Table~\ref{tab:smallresults}.

\begin{table}[h!]
\caption{Some Small Exact Results}
\centering
\begin{tabular}{|c|c|c|c|c|c|c|c|c|c|}
\hline $(m,k)$, $m\ge k$ & $(m,1)$ & $(m,2)$ & $(3,3)$ & $(4,3)$ & $(5,3)$ & $(6,3)$ & $(7,3)$ & $(4,4)$ & $(5,4)$ \\
\hline $\ESO(m,k)$ & 1 & $m$ & 4 & 6 & 7 & 9 & 10 & 8 & 11\\
\hline $\ES(m,k)$ & 1 & $m$ & 5 & 7 & 9 & 11 & 13 & 10 & 13\\
\hline
\end{tabular}\label{tab:smallresults}
\end{table}

\section{A General Lower Bound for $\ESO(m,k)$}\label{sec:mkLB}

In this section, we study a closely related game $B_{m,k}$ (for $k \ge 2$).
The game is precisely the same as $A_{m,k}$, except that player B is always restricted to choose from the set $\{1,2,\ldots,k-1\}$ (referred to as \textit{tiers}).
Note that a $k$-down-run is impossible in this game, and hence player A's objective is to force an $m$-up-run while player B's is to avoid one.
Call the minimum number of moves in which player A can force a win in this game $B(m,k)$.
Player B can use a strategy from this game to play the previous game $A_{m,k}$.
Therefore, $\ESO(m,k)\ge B(m,k)$ for all $m$ and $k$.
Hence Theorem~\ref{thm:mkLB} is corollary to Proposition~\ref{prop:BgameLB2}.

For the game $B_{m,k}$, we will use a few more terms.

\begin{definition}\textit{$\L_x$, $\R_x$, Separated}\\
At any point during the game, we denote by $\L_x$ the set of points $p$ to the left of $x$, i.e. $x(p) < x$.
Similarly, we denote by $\R_x$ the set of points to the right of $x$.
Two $x$-values $x_1$ and $x_2$ are \textit{separated} by $t$ points if $|\R_{x_1} \cap \L_{x_2}| + |\R_{x_2} \cap \L_{x_1}| = t$.
To say that points $p_1$ and $p_2$ are \textit{separated} is to say that $x(p_1)$ and $x(p_2)$ are separated.
\end{definition}

\begin{proposition}\label{prop:BgameUB}
\[
B(m,k) \le \left\lfloor\frac{k}{2}\right\rfloor (m-1)+1.
\]
\end{proposition}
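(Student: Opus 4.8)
The plan is to exhibit an explicit strategy for player A and bound its length by induction on $m$, showing that A can force the longest up-run to grow from length $\ell$ to length $\ell+1$ using only $\lfloor k/2\rfloor$ additional moves; together with the base case $\ell=1$ (one move) this yields $B(m,k)\le \lfloor k/2\rfloor(m-1)+1$. It is convenient to work with $t=k-1$ tiers and to track, for each played point $p$, the quantity $u(p)$ equal to the length of the longest up-run having $p$ as its right-most point, together with the profile $r(\tau)=\max\{u(p):y(p)\le\tau\}$ for $\tau=1,\dots,t$. The profile is non-decreasing in $\tau$, its top value $r(t)$ is exactly the current longest up-run, and player A wins once $r(t)=m$. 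The whole argument amounts to controlling how fast A can force B to raise this profile.

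First I would record the two elementary facts that drive the strategy. When A plays a new point at a location $x$ and B answers with tier $\tau'$, the new point receives value $1+\max\{u(q):q\in\L_x,\ y(q)\le\tau'\}$, so B's incentive is always to answer in a low tier; and, dually, inserting that point to the left of an existing point $s$ with $y(s)\ge\tau'$ can only increase $u(s)$. The first fact shows that merely appending on the right forces B to raise the profile by one unit per move (B answers in whichever tier currently has the smallest $r$-value), which already gives the weak bound of about $t(m-1)+1$. The heart of the proof is to do twice as well by \emph{inserting in the middle}: a single inserted point can simultaneously raise $r$ at its own tier (through the point it creates) and raise $r$ at a strictly higher tier (through the propagation in the second fact onto a pre-placed ``receptor'' point to its right). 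This doubling is exactly the pairing of tiers responsible for the coefficient $\lfloor k/2\rfloor$ rather than $k-1$.

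Concretely, for the extension step I would have A first lay down a short descending arrangement of receptor points and then play a bounded number of insertions, using the notion of being \emph{separated} to keep exact track of which points lie between the chosen $x$-values and hence of which up-runs an inserted point joins. The design is arranged so that, whatever tier B selects, the move is charged two units of profile progress (one at B's tier, one at the paired higher tier via propagation), except for at most one unpaired tier when $t$ is odd, which accounts for the floor and for the parity dependence already visible in the bound. Summing, A forces $r(t)$ from $\ell$ to $\ell+1$ in $\lceil t/2\rceil=\lfloor k/2\rfloor$ moves, completing the induction; the base cases $t\in\{1,2\}$ are the one-tier ``every point is in the run'' observation and the two-tier ``insert at the boundary so that both of B's options keep the sequence non-decreasing'' observation, each costing one move per unit of length.

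I expect the main obstacle to be the extension gadget itself: one must place the receptor points and choose each insertion so that player B is genuinely unable to respond in a tier that contributes only a single unit of progress (in particular, B must be prevented from ``escaping to the bottom tier'' and depositing a useless low-$u$ point), and so that the receptors created along the way are reusable across successive extensions rather than rebuilt each time. Verifying that the bookkeeping via separation stays consistent as points accumulate, and that the odd-$t$ leftover tier is charged correctly, is where the real work lies.
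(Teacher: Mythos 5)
Your proposal has a genuine gap: the entire argument rests on an ``extension gadget'' that is never constructed. You assert that player A can place receptor points and then make insertions so that \emph{every} tier B might answer in is charged two units of profile progress (one at B's tier, one at a paired higher tier via propagation to a receptor), but you do not exhibit the placement, and the difficulty is not merely bookkeeping. For a response in tier $\tau'$ to propagate a full unit to a receptor $s$ with $y(s)\ge\tau'$, the new point $q$ must satisfy $u(q)\ge u(s)$, where $u(q)=1+\max\{u(p): p \text{ left of } q,\ y(p)\le\tau'\}$ is controlled by B's choice of $\tau'$; B will answer in whichever tier makes the propagated gain vacuous, and it is not shown that a single $x$-position can be rigged so that all $k-1$ possible answers are simultaneously ``doubly productive.'' Moreover, the receptors themselves cost moves, and your bound only survives if they are reusable across all $m-1$ extension rounds, which you flag as unresolved. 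Since you yourself identify these points as ``where the real work lies,'' the proposal is a plan rather than a proof.

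For comparison, the paper's argument avoids per-move charging entirely. Player A always plays at the single $x$-value separating the points with $y$-value at most $k/2$ (kept on the left) from those with $y$-value greater than $k/2$ (kept on the right); this invariant is trivially maintainable. Then a longest up-run among the low points concatenates with a longest up-run among the high points, and the classical Erd\H{o}s--Szekeres bound applied to each half --- where down-runs have length at most $\lfloor k/2\rfloor$ and $\lfloor (k-1)/2\rfloor$ respectively --- caps the total number of points at $\ell\lfloor k/2\rfloor + r\lfloor (k-1)/2\rfloor \le (m-1)\lfloor k/2\rfloor$ while $\ell+r\le m-1$. The factor $\lfloor k/2\rfloor$ thus comes from halving the tier set globally, not from pairing tiers move by move. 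If you want to salvage your inductive framework, you would need to actually build and verify the doubling gadget, including the case where B retreats to the bottom tier; as written, the claimed coefficient is unsupported.
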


\begin{proof}
We describe a strategy for player A.
Player A always chooses an $x$-value $\hat{x}$ such that the $y$-values of points in $\L_{\hat{x}}$ are all at most $\frac{k}{2}$, and the $y$-values of points in $\R_{\hat{x}}$ are all greater than $\frac{k}{2}$.
By following this strategy from the first turn, player A will always be able to choose such an $\hat{x}$.
Let $\ell$ be the size of a longest up-run with $y$-values at most $\frac{k}{2}$, and let $r$ be the size of a longest up-run with $y$-values greater than $\frac{k}{2}$.
Due to player A's strategy, these two up-runs together form an $(\ell+r)$-up-run.
Applying Erd\H{o}s-Szekeres to each of the parts separately yields that there are at most $\ell\left\lfloor\frac{k}{2}\right\rfloor$ points with $y$-values at most $\frac{k}{2}$, and at most $r\left\lfloor\frac{k-1}{2}\right\rfloor$ points with $y$-values greater than $\frac{k}{2}$ at any time.
Plugging in $\ell+r=m-1$ at the penultimate turn yields the claimed upper bound.
\end{proof}

Since $B(m,k) \ge m$, Proposition~\ref{prop:BgameUB} implies that $B(m,2) = B(m,3) = m$ for all $m$.
For the lower bound, we first prove the following, weaker version of Proposition~\ref{prop:BgameLB2}.

\begin{proposition}\label{prop:BgameLB1}
\[
B(m,k) \ge \left\lfloor\frac{k}{2}\right\rfloor (m-k+1)+k-1.
\]
\end{proposition}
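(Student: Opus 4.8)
The plan is to exhibit a single strategy for player B in the game $B_{m,k}$ and to show that, against any play of A, it prevents an $m$-up-run for many turns; since a $k$-down-run is impossible here, B's only task is to keep the longest up-run short. It is convenient to reformulate: reading the played points in order of $x$-coordinate turns the board into a sequence over the alphabet $\{1,\dots,k-1\}$ of tiers, where A inserts a new position and B assigns its tier, and the longest up-run is exactly the longest non-decreasing subsequence of this sequence. By Mirsky's theorem this length equals the minimum number of strictly decreasing subsequences (down-runs) needed to partition the sequence. So B's concrete goal is to maintain an explicit partition of the board into a small number $t$ of down-runs; since the longest up-run is always at most $t$, any bound B guarantees on $t$ transfers to the longest up-run.

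First I would isolate the amortized statement that drives the bound. Writing $q=\lfloor k/2\rfloor$ and letting $n$ be the number of points played, the claim is that B can maintain a down-run partition with $t \le (k-1)+\lfloor (n-k+1)/q\rfloor$ for all $n\ge k-1$. Since the longest up-run is at most $t$, A can create an $m$-up-run only once $(k-1)+\lfloor (n-k+1)/q\rfloor\ge m$, which forces $n\ge q(m-k+1)+k-1$, exactly the bound of Proposition~\ref{prop:BgameLB1}. Note the right-hand side grows by precisely $q$ each time $m$ increases by $1$; this is the algebraic shadow of the real content, namely that each unit increase of B's partition size beyond the base value $k-1$ should cost A at least $q$ turns. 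After disposing of the first $k-1$ turns (during which $t\le n\le k-1$ automatically), the task reduces to this amortized ``$q$ turns per increment'' estimate.

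The mechanism is that B responds to each insertion by slotting the new point into an existing down-run whenever some tier value keeps that run strictly decreasing; B is forced to open a new run only when A plays at a position $\hat{x}$ where every maintained down-run is ``tight'', i.e.\ straddled by two points of consecutive tiers (with the natural boundary cases of a left-only point at tier $1$ or a right-only point at tier $k-1$). The crucial quantitative step is to show that A cannot make all of B's down-runs tight at a common $\hat{x}$ without recently having spent about $k/2$ turns: because B may route a new point into whichever half of the tier range is less saturated, A must essentially fill roughly half of the available tiers to block B, and it is here that the factor $\lfloor k/2\rfloor$, together with its dependence on the parity of $k$, should emerge.

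The hard part will be making this blocking/charging argument precise while controlling the fact that a single middle insertion can lengthen up-runs on both sides of $\hat{x}$ at once. To handle this I would carry an auxiliary structural invariant recording how the tiers realizing the longest current up-runs are distributed, arranged so that immediately after $t$ increases there is a guaranteed ``buffer'' of $q-1$ safe turns; the inductive step then charges each forced increment of $t$ to the $q$ turns A must invest to defeat that buffer. I expect that verifying this invariant is preserved under every insertion, and in particular pinning down the exact constants and the parity split, is the main obstacle, and it is presumably precisely what the later Proposition~\ref{prop:BgameLB2} sharpens.
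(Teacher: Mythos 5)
Your proposal sets up a plausible framework (tiers as an alphabet, longest up-run bounded by the size of a maintained down-run cover via Mirsky/Dilworth, and an amortized target $t\le (k-1)+\lfloor (n-k+1)/\lfloor k/2\rfloor\rfloor$ whose algebra does match the claimed bound), but it stops exactly where the proof has to happen. The central quantitative claim --- that A cannot force B to open a new down-run without first investing roughly $\lfloor k/2\rfloor$ turns, i.e.\ that each increment of $t$ beyond $k-1$ can be charged to $\lfloor k/2\rfloor$ fresh points --- is asserted, not proved: you never specify which down-run B extends, what ``tight'' means precisely enough to count blocked configurations, or what the auxiliary invariant is, and you explicitly flag its verification as the main obstacle. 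Since that charging step \emph{is} the proposition, this is a genuine gap rather than a routine omission. A further difficulty you would hit if you pursued it: an insertion in the middle of the board can simultaneously damage the structure of several maintained down-runs (a point inserted between two members of a run need not fit strictly between their tiers), so ``maintain a small down-run cover online'' is not obviously stable under A's adversarial placement; your buffer invariant would have to control this, and nothing in the proposal indicates how.

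The paper avoids all of this with a much more direct argument. Player B's rule is local: in response to $\hat{x}$, play any tier in $\{1,\dots,k-1\}$ such that every previously played point in that tier is separated from $\hat{x}$ by at least $\lfloor k/2\rfloor-1$ points. This is always feasible because only the $\lfloor k/2\rfloor-1$ nearest points on each side of $\hat{x}$ can block a tier, so at most $2\lfloor k/2\rfloor-2\le k-2<k-1$ tiers are blocked. Then, given any $m$-up-run $\S=\{p_1,\dots,p_m\}$, its tier values are non-decreasing, so at most $k-2$ consecutive pairs change tier and at least $m-k+1$ consecutive pairs share a tier; each such pair contributes at least $\lfloor k/2\rfloor-1$ separating points lying strictly between them (hence outside $\S$ and pairwise distinct across pairs), giving $m+(\lfloor k/2\rfloor-1)(m-k+1)=\lfloor k/2\rfloor(m-k+1)+k-1$ points in total. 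No cover maintenance, no amortization, and the factor $\lfloor k/2\rfloor$ falls out of the feasibility count rather than a charging scheme. If you want to salvage your route, the honest comparison is that your ``$q$ turns per increment'' heuristic is morally the same separation phenomenon, but the paper extracts it from a single static count over the final up-run instead of an online invariant.
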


\begin{proof}
We describe a strategy for player B.
At every step, in response to player A choosing $\hat{x}$, player B plays any value in $\{1,\dots ,k-1\}$ such that any other point in the same tier is separated from $\hat{x}$ by at least $\left\lfloor \frac{k}{2}\right\rfloor -1$ points.
Now let $\S = \{ p_1, \dots, p_m \}$ be an $m$-up-run.
Since $\S$ can have at most $k-2$ points $p_i$ such that $y(p_{i+1}) \ne y(p_i)$, there are at least $(m-1)-(k-2)=m-k+1$ points $p_i$ such that $p_{i+1}$ is in the same tier.
Since points in the same tier must be separated by at least $\left\lfloor \frac{k}{2}\right\rfloor -1$ points, this accounts for $m+\left(\lfloor\frac{k}{2}\rfloor -1\right) (m-k+1)$ total points played.
\end{proof}

By avoiding the bottom tiers when player A chooses an $x$-value to the far left and avoiding the top tiers when player A chooses an $x$-value to the far right, player B can adjust the strategy given above to guarantee a few more points.

\begin{proposition}\label{prop:BgameLB2}
Let $k \ge 4$.
If $k$ is even, then $\displaystyle B(m,k) \ge \frac{k}{2} (m-k+5) -3$.\\
If $k$ is odd, then $\displaystyle B(m,k) \ge \frac{k-1}{2} (m-k+6) -3$.
\end{proposition}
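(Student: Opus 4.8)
The plan is to sharpen the strategy of Proposition~\ref{prop:BgameLB1} by superimposing an \emph{edge rule} on the separation rule, and then to re-examine a final $m$-up-run with this extra information. Throughout write $h=\lfloor k/2\rfloor$, so that the separation rule of Proposition~\ref{prop:BgameLB1} keeps any two points of a common tier separated by at least $h-1$ points. Recall from that proof that if the final $m$-up-run is $p_1,\dots,p_m$ with (nondecreasing) tiers $t_1\le\cdots\le t_m$, then the number of consecutive same-tier pairs equals $m$ minus the number of tiers the up-run uses, which is at least $m-(t_m-t_1+1)$; each such pair contributes $h-1$ separating points lying strictly inside the interval $(x(p_1),x(p_m))$. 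The whole point of the edge rule will be to either push $t_1$ up and $t_m$ down (gaining same-tier pairs) or else force extra points lying \emph{outside} $[x(p_1),x(p_m)]$, which are automatically disjoint from the separating points just counted.

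Concretely, I would have player B keep the separation rule and add the following. For even $k$: forbid tier $1$ whenever $|\L_{\hat x}|\le h-2$ and forbid tier $k-1$ whenever $|\R_{\hat x}|\le h-2$. For odd $k$: always forbid tiers $1$ and $k-1$, and in addition forbid tier $2$ whenever $|\L_{\hat x}|\le h-2$ and tier $k-2$ whenever $|\R_{\hat x}|\le h-2$. The first step is to check this is always a legal move: the separation rule forbids at most $\min(|\L_{\hat x}|,h-1)+\min(|\R_{\hat x}|,h-1)$ tiers, and one verifies that adding the edge-forbidden tiers keeps the total strictly below $k-1$. A key consequence, obtained from the contrapositive of the edge rule together with the fact that $|\L_{\hat x}|$ is nondecreasing in time, is the following persistence statement: if $p_1$ lies in the lowest admissible bottom tier ($t_1=1$ for even $k$, $t_1=2$ for odd $k$), then at the end of the game there are at least $h-1$ points strictly to the left of $p_1$; symmetrically on the right with $p_m$.

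The counting then splits into a short dichotomy. For even $k$, set $\alpha=t_1-1\ge0$ and $\beta=(k-1)-t_m\ge0$, so the up-run uses at most $k-1-\alpha-\beta$ tiers and hence gains $(\alpha+\beta)(h-1)$ separating points over the baseline; the persistence statement adds $h-1$ left-points when $\alpha=0$ and $h-1$ right-points when $\beta=0$. Checking the four cases $\alpha,\beta\in\{0,\ge1\}$ shows the total excess is always at least $2(h-1)=k-2$, which added to Proposition~\ref{prop:BgameLB1} gives $\frac{k}{2}(m-k+5)-3$. For odd $k$ the always-forbidden extreme tiers force $t_1\ge2$ and $t_m\le k-2$; setting $\alpha=t_1-2$, $\beta=(k-2)-t_m$, the up-run now uses at most $k-3-\alpha-\beta$ tiers, gaining $(2+\alpha+\beta)(h-1)$ separating points, and persistence again adds $h-1$ on each side when the relevant parameter is $0$. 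The same case analysis yields an excess of at least $3(h-1)=\frac{3k-9}{2}$, producing $\frac{k-1}{2}(m-k+6)-3$. (It is precisely because the separation rule leaves one free tier when $k$ is even and two when $k$ is odd that one may permanently forbid both extreme tiers in the odd case, which is where the parity split in the statement comes from.)

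The main obstacle is the feasibility check, not the counting. Most configurations have ample slack, but the estimate is tight exactly when \emph{both} edge rules fire simultaneously, i.e.\ when $|\L_{\hat x}|,|\R_{\hat x}|\le h-2$ and the board is nearly empty; for odd $k$ the naive count of forbidden tiers can reach $k-1$, threatening to leave no legal tier. Resolving this requires arguing that player B, having built the configuration itself, never fills all of the middle tiers $\{3,\dots,k-3\}$ within separation range of $\hat x$ — most cleanly handled by an invariant carried along with the strategy, or by treating the small-board regime (where $n<k\le m$ and no $m$-up-run is yet possible) separately so that the edge rules need only be enforced once the board is large enough to distinguish its edges from its middle. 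I expect this feasibility bookkeeping, rather than the excess-counting dichotomy, to be the delicate part of the argument.
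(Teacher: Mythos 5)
Your even-$k$ argument is sound and is essentially the paper's: the paper implements the ``edge rule'' as a graded window, forcing tier $j$ to satisfy $j \ge \frac{k}{2}-|\L_{\hat x}|$ and $j \le \frac{k}{2}+|\R_{\hat x}|$ (so that every point $p$ ends up with at least $\max\{0,\frac{k}{2}-y(p)\}$ points to its left and $\max\{0,y(p)-\frac{k}{2}\}$ to its right), and then counts exactly as you do; your single-threshold version on the extreme tiers alone gives the same excess $2(\frac{k}{2}-1)$ and the same feasibility margin of $k-2$ forbidden tiers. The problem is the odd case, and it is precisely the problem you flagged: your strategy there is not a strategy. Permanently banning tiers $1$ and $k-1$, conditionally banning $2$ and $k-2$, and keeping the separation rule can forbid all $k-1$ tiers. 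This is not merely a delicate corner case to be patched by an invariant: for $k=5$ (so $h=2$, tiers $\{1,2,3,4\}$) your rules forbid tiers $1$ and $4$ outright and tiers $2$ and $3$ because $|\L_{\hat x}|=|\R_{\hat x}|=0$ on the very first move, so player B has no legal reply to player A's opening. Your first proposed repair (player B never fills all middle tiers $\{3,\dots,k-3\}$ within separation range) is vacuous here because that set is empty, and your second (suspending the edge rules on a small board) would let early points land in the banned extreme tiers, destroying the persistence guarantees your counting relies on for $p_1$ and $p_m$.

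The paper avoids this entirely: for odd $k$ it defines player B's strategy to be the even $(k-1)$ strategy played on tiers $1,\dots,k-2$ (i.e., only one extreme tier is permanently sacrificed, and the remaining edge conditions are the graded ones for parameter $k-1$). That is why the odd bound in the statement is literally the even bound with $k$ replaced by $k-1$, namely $\frac{k-1}{2}\big(m-(k-1)+5\big)-3$. Your counting for the odd case, if the strategy were legal, would actually yield a slightly larger excess (about $4(h-1)$ rather than the $3(h-1)$ needed), which is a sign that you are demanding more of player B than is feasible; dropping one of the two permanent bans brings you back to a legal strategy and to the stated bound. To complete your write-up you should either adopt the paper's reduction for odd $k$ or prove a genuine feasibility invariant, which as it stands you have not done.
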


\begin{proof}
We describe a strategy for player B.
First assume that $k$ is even.
Suppose player A chooses $\hat{x}$.
Now, let $c = \max\{1, \frac{k}{2} -|\L_{\hat{x}}| \}$ and let $d = \min\{ k-1, \frac{k}{2} + |\R_{\hat{x}}| \}$.
Then player B plays any value in $\{c,\dots ,d\}$ such that any other point in the same tier is separated from $\hat{x}$ by at least $\frac{k}{2} -1$ points.
Thus for any point $p$, we have both $|\L_{x(p)}| \ge \max\left\{ 0, \frac{k}{2}-y(p) \right\}$ and $|\R_{x(p)}| \ge \max\left\{ 0, y(p) - \frac{k}{2} \right\}$ at any time after $p$ has been played.
Now let $\S = \{ p_1, \dots, p_m \}$ be an $m$-up-run.
Observe that there are at least $(m-1)-\big( y(p_m) - y(p_1) \big)$ pairs of consecutive points in $\S$ which are in the same tier.
Then counting points in $\S$, points separating consecutive points in $\S$ in the same tier, and points to the left and right of $\S$ yields at least 
\[
m + \Big[ m-1 -\Big(y(p_m) - y(p_1)\Big) \Big] \left( \frac{k}{2} -1 \right) + \max\left\{ 0, \frac{k}{2}-y(p_1) \right\} + \max\left\{ 0, y(p_m) - \frac{k}{2} \right\}
\]
 total points.
The above expression is minimized when $y(p_m)-y(p_1) = k-2$, which means that at least $m + \big[ m-1 -(k-2) \big] \left( \frac{k}{2} -1 \right) + k-2 = \frac{k}{2} (m-k+5) -3$ points have been played.

If $k$ is odd, then player B can use the strategy for $k-1$ (by never choosing the top tier) to obtain the claimed bound.
\end{proof}

\section{Establishing an Upper Bound for $\ESO(m,3)$}\label{sec:m3UB}

We provide an upper bound for $\ESO(m,3)$ by describing a strategy for player A.
This strategy (see Definition~\ref{def:Astrategy}) will assess the state of the game and decide which of several strategy ``modes" to use at that time.
We begin by defining some terms used throughout this section and the next, and then define the different strategy modes.
We finish by analyzing the full strategy in Lemma~\ref{lemma:m3UB}.

\begin{definition}\textit{Columns, Rows, Notches}\\
Suppose a game of $A_{m,k}$ is underway with a set of points $\P = \{p_1, \dots, p_t\}$ having been played.
Let $p_0 = 0$ and $p_{t+1}=1$.
Then each of the intervals $\{\big(x(p_i), x(p_{i+1})\big)\}_{i=0}^t$ is a set of equivalent choices for player A's next turn, called a \textit{column}.
When we say that player A plays in a column $(x(p_i),x(p_{i+1}))$, we mean that player A chooses any $x$-value in $(x(p_i),x(p_{i+1}))$; for formality, without loss of generality we may assume that player A chooses the $x$-value $\frac{x(p_i)+x(p_{i+1})}{2}$.

Similarly, a \textit{row} is an interval between consecutive $y$-values of $\P$ from which all choices that player $B$ makes are equivalent given the state of the game.

A \textit{notch} is a unit of measurement between columns or between rows with respect to a point and a subset of points.
Let $p \in \P$, let $\A \subseteq \P$, and let $\tilde{\A} = \A \cup \{p\}$.
Let $p_1, \dots, p_s$ be an ordering of $\tilde{\A}$  where $p_i = p$.
Consider the columns induced by $\tilde{\A}$, say $I_j = (x(p_j),x(p_{j+1}))$ for $0 \le j \le t$.
Then for each $j < i$, we say that the column $I_j$ is $i-j-1$ \textit{notches left of $p$ with respect to $\A$}, or simply $i-j-1$ \textit{notches left of $p$} if $\A$ is understood.
For each $j \ge i$, we say that the column $I_j$ is $j-i$ \textit{notches right of $p$}.

Similarly, we speak of rows being some number of notches \textit{above} or \textit{below} $p$ (with respect to $\A$).
(See Figure~\ref{fig:notches}.)
\end{definition}

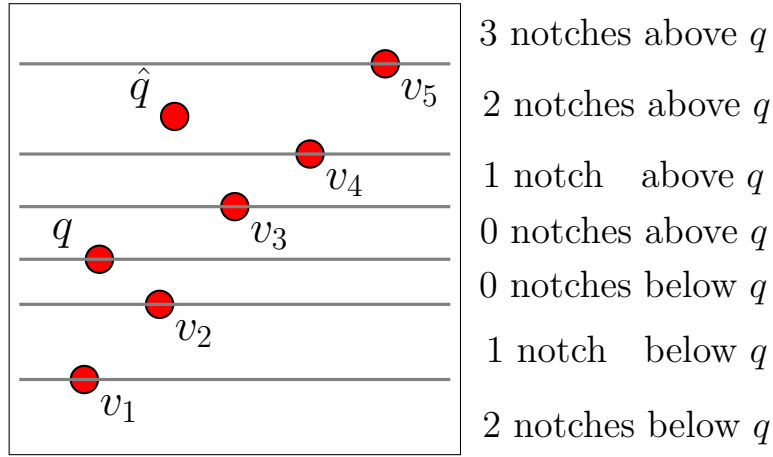
\begin{figure}[h!]
\center
\begin{tikzpicture}

	\draw (0,0) rectangle (6,6);
	\node (1a) at (0,1) {};
	\node (1b) at (6,1) {};
	\node (2a) at (0,2) {};
	\node (2b) at (6,2) {};
	\node (3a) at (0,3.3) {};
	\node (3b) at (6,3.3) {};
	\node (4a) at (0,4) {};
	\node (4b) at (6,4) {};
	\node (5a) at (0,5.2) {};
	\node (5b) at (6,5.2) {};
	\node (qa) at (0,2.6) {};
	\node (qb) at (6,2.6) {};
    
\begin{scope}[every node/.style={circle,thick,draw,shade,shading=axis,left color=red,right color=red,shading angle=90}]
    \node (1) at (1,1) {};
    \node (2) at (2,2) {};
    \node (3) at (3,3.3) {};
    \node (4) at (4,4) {};
    \node (5) at (5,5.2) {};
    \node (q) at (1.2,2.6) {};
    \node (qh) at (2.2,4.5) {};
\end{scope}

	\node at ([shift={(-40:.6)}]1) {\LARGE $v_1$};
	\node at ([shift={(-40:.6)}]2) {\LARGE $v_2$};
	\node at ([shift={(-40:.6)}]3) {\LARGE $v_3$};
	\node at ([shift={(-40:.6)}]4) {\LARGE $v_4$};
	\node at ([shift={(-40:.6)}]5) {\LARGE $v_5$};
	\node at ([shift={(145:.6)}]q) {\LARGE $q$};
	\node at ([shift={(140:.6)}]qh) {\LARGE $\hat{q}$};

	\node at ([shift={(10:2.2)}]5b) {\Large 3 notches above $q$};
	\node at ([shift={(16:2.3)}]4b) {\Large 2 notches above $q$};
	\node at ([shift={(10:2.2)}]3b) {\Large 1 notch~~ above $q$};
	\node at ([shift={(9:2.2)}]qb) {\Large 0 notches above $q$};
	\node at ([shift={(-10:2.2)}]qb) {\Large 0 notches below $q$};
	\node at ([shift={(-16:2.3)}]2b) {\Large 1 notch~~ below $q$};
	\node at ([shift={(-16:2.3)}]1b) {\Large 2 notches below $q$};

\begin{scope}[-,
              every edge/.style={draw=gray,very thick}]
    \path [-] (1a) edge node {} (1b);
    \path [-] (2a) edge node {} (2b);
    \path [-] (3a) edge node {} (3b);
    \path [-] (4a) edge node {} (4b);
    \path [-] (5a) edge node {} (5b);
    \path [-] (qa) edge node {} (qb);
\end{scope}

\end{tikzpicture}
\caption{Let $\A = \{v_1,v_2,v_3,v_4,v_5\}$.
Then $\hat{q}$ is 2 notches above $q$ (with respect to $\A$).}
\label{fig:notches}
\end{figure}

\begin{definition}\textit{``$f$-Middling" Strategy Mode}\\
In the game $A_{m,3}$, we have the initial conditions $\S = \N = \W = \varnothing$, $(a_x,b_x) = (a_y,b_y) = (0,1)$ and $\t = 0$, where $\S$ is the point set in the \textit{active segment} $(a_x,b_x) \times (a_y,b_y)$ forming an up-run, $\N$ is an up-run of \textit{saved} points previously in $\S$, $\W$ is the set of ``wasted" points not extending the up-run,  and \t ~is the number of times player B has chosen to not extend $\S$ into a longer up-run.
The parameter $f$ is a sequence $\{f_i\}_{i=1}^\infty$ of nonnegative integers given ahead of time.
As player A, do the following:
\begin{enumerate}
\item\label{Mstep:play} Play in a middlemost column of $\S$ with respect to $(a_x,b_x) \times (a_y,b_y)$, say the interval $( x(p_\ell), x(p_r) )$.
(Possibly with $p_\ell = (0,0)$ or $p_r = (1,1)$ being artificial points.)
Player B chooses a row, creating the new point $q$.
Assume the game is not over and thus that $y(q) \in (a_y,b_y)$.

\item If $q$ extended $\S$ to be a longer up-run, then return to Step~\ref{Mstep:play}.
Otherwise, $q$ is above $p_r$ or below $p_\ell$; increment \t.
If $|\NE(q) \cap \S| \le f_\t$ or $|\SW(q) \cap \S| \le f_\t$, then exit the strategy mode now.
Add $q$ to $\W$.
Go to Step~\ref{Mstep:pointright} if $q$ is above $p_r$ and go to Step~\ref{Mstep:pointleft} if $q$ is below $p_\ell$.

\item\label{Mstep:pointright} Let $\hat{q} = \argmax_{p \in \SE(q) \cap \S} x(p)$ and redefine $a_x := x(\hat{q})$ and $a_y := y(\hat{q})$.
Then update $\S$ and $\N$ by deleting $(\SW(\hat{q}) \cup \{\hat{q}\}) \cap \S$ from $\S$ and adding these points to $\N$.
Return to Step~\ref{Mstep:play}.

\item\label{Mstep:pointleft} Let $\hat{q} = \argmin_{p \in \NW(q) \cap \S} x(p)$ and redefine $b_x := x(\hat{q})$ and $b_y := y(\hat{q})$.
Then update $\S$ and $\N$ by deleting $(\NE(\hat{q}) \cup \{\hat{q}\}) \cap \S$ from $\S$ and adding these points to $\N$.
Return to Step~\ref{Mstep:play}.
\end{enumerate}

We illustrate the $f$-Middling strategy mode in the Appendix.
\end{definition}

\begin{definition}\textit{Barb, ``Playing the Barb" Strategy Mode}\\
An \textit{$(s, t)$-barb}, or simply \textit{barb}, is a subset $\B = \{w,z\} \cupdot \U \cupdot \V \subseteq \P$ such that $w,z$ form a down-run, $\U$ and $\V$ are (possibly empty) $s$- and $t$-up-runs, respectively, and also $x(w) < \max_{u \in \U} x(u) < \min_{v \in \V} x(v) < x(z)$ and $\max_{u \in \U} y(u) < y(z) < y(w) < \min_{v \in \V} y(v)$.
We say that $w$ and $z$ are the \textit{spikes} of $\B$, that $\U$ is the \textit{bottom wire} of $\B$, and that $\V$ is the \textit{top wire} of $\B$.
If one of $\U$ or $\V$ is empty, we drop the appropriate inequality conditions and say that $\B$ is a \textit{half-barb}.
We say that player A \textit{plays the barb} by doing the following:
\begin{enumerate}
\item\label{PBstep:play} Let $a := \max_{p \in \U \cup \{w\}} x(p)$ and $b := \min_{p \in \V \cup \{z\}} x(p)$.
Play in the column $(a,b)$ with respect to $\B$.
Player B chooses a row, creating the new point $q$.

\item Assuming the game is not over, $q$ is in the row zero notches above $w$ or zero notches below $z$ with respect to $\B$.
If the former, add $q$ to $\V$; if the latter, add $q$ to $\U$.
Return to Step~\ref{PBstep:play}.
\end{enumerate}

\end{definition}

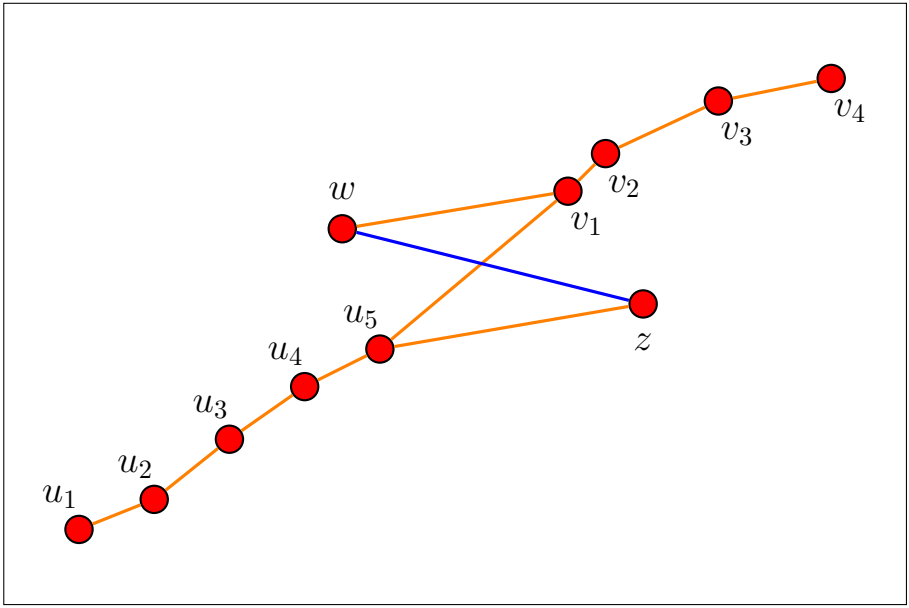
\begin{figure}[h!]
\center
\begin{tikzpicture}

	\draw (0,0) rectangle (12,8);
    
\begin{scope}[every node/.style={circle,thick,draw,shade,shading=axis,left color=red,right color=red,shading angle=90}]
    \node (u1) at (1,1) {};
    \node (u2) at (2,1.4) {};
    \node (u3) at (3,2.2) {};
    \node (u4) at (4,2.9) {};
    \node (u5) at (5,3.4) {};
    \node (v1) at (7.5,5.5) {};
    \node (v2) at (8,6) {};
    \node (v3) at (9.5,6.7) {};
    \node (v4) at (11,7) {};
    \node (w) at (4.5,5) {};
    \node (z) at (8.5,4) {};
\end{scope}

	\node at ([shift={(120:.5)}]u1) {\Large $u_1$};
	\node at ([shift={(120:.5)}]u2) {\Large $u_2$};
	\node at ([shift={(120:.5)}]u3) {\Large $u_3$};
	\node at ([shift={(120:.5)}]u4) {\Large $u_4$};
	\node at ([shift={(120:.5)}]u5) {\Large $u_5$};

	\node at ([shift={(-60:.5)}]v1) {\Large $v_1$};
	\node at ([shift={(-60:.5)}]v2) {\Large $v_2$};
	\node at ([shift={(-60:.5)}]v3) {\Large $v_3$};
	\node at ([shift={(-60:.5)}]v4) {\Large $v_4$};
	
	\node at ([shift={(90:.5)}]w) {\Large $w$};
	\node at ([shift={(-90:.5)}]z) {\Large $z$};

\begin{scope}[-,
              every edge/.style={draw=orange,very thick}]
    \path [-] (u1) edge node {} (u2);
    \path [-] (u2) edge node {} (u3);
    \path [-] (u3) edge node {} (u4);
    \path [-] (u4) edge node {} (u5);
    \path [-] (u5) edge node {} (z);
    
    \path [-] (u5) edge node {} (v1);
    
    \path [-] (w) edge node {} (v1);
    \path [-] (v1) edge node {} (v2);
    \path [-] (v2) edge node {} (v3);
    \path [-] (v3) edge node {} (v4);
\end{scope}

\begin{scope}[-,
              every edge/.style={draw=blue,very thick}]
    \path [-] (w) edge node {} (z);
\end{scope}

\end{tikzpicture}
\caption{A (5,4)-barb, with $\U = \{u_1,u_2,u_3,u_4,u_5\}$, $\V = \{v_1,v_2,v_3,v_4\}$, and spikes $w,z$.}
\end{figure}

The following observation illustrates the usefulness of barbs for player A.

\begin{observation}
If a game of $A_{m,3}$ is underway with an $(s,t)$-barb $\B$, then the game will end after at most $m - s -t$ more turns if player A plays the barb $\B$.
\end{observation}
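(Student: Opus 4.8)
The plan is to find a single long up-run hidden inside the barb and show that playing the barb forces it to grow by exactly one on every turn. First I would record the structural observation that $\U \cup \V$ is itself an $(s+t)$-up-run. The defining inequalities of an $(s,t)$-barb give $\max_{u \in \U} x(u) < \min_{v \in \V} x(v)$ together with $\max_{u \in \U} y(u) < y(z) < y(w) < \min_{v \in \V} y(v)$, so in particular $\max_{u \in \U} y(u) < \min_{v \in \V} y(v)$; hence every point of $\U$ lies strictly below and to the left of every point of $\V$, and concatenating the up-runs $\U$ and $\V$ yields an up-run of length $s+t$. This is the quantity I will track across the game.

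Next I would analyze a single turn. Player A plays at some $\hat x = \xi$ in the column $(a,b)$, so the new point $q = (\xi,\eta)$ has exactly $\U \cup \{w\}$ to its left and $\V \cup \{z\}$ to its right. I claim that unless $\eta$ lies in the row zero notches below $z$ (that is, $\max_{u} y(u) < \eta < y(z)$) or zero notches above $w$ (that is, $y(w) < \eta < \min_{v} y(v)$), the point $q$ completes a $3$-down-run and the game ends. The case analysis splits into three: if $y(z) < \eta < y(w)$ then $w,q,z$ is a $3$-down-run; if $\eta < \max_u y(u)$ then, writing $u^*$ for the rightmost point of $\U$ (which, because $\U$ is an up-run, is also its highest point), $w, u^*, q$ is a $3$-down-run; symmetrically, if $\eta > \min_v y(v)$, writing $v^*$ for the leftmost and hence lowest point of $\V$, the triple $q, v^*, z$ is a $3$-down-run. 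Checking that each triple is genuinely decreasing in $y$ as $x$ increases is routine once one notes that $u^*$ and $v^*$ are extremal in both coordinates, which is exactly where the up-run structure of the two wires is used.

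It then remains to observe that in each of the two surviving rows the point $q$ extends the tracked up-run. In both rows $q$ lies strictly to the right of and above every point of $\U$ and strictly to the left of and below every point of $\V$, so $\U \cup \{q\} \cup \V$ is an up-run of length $s+t+1$; this matches the strategy's instruction to place $q$ in $\V$ in the first row and in $\U$ in the second, leaving $\B$ a valid $(s,t+1)$- or $(s+1,t)$-barb. Consequently every turn that does not end the game lengthens $\U \cup \V$ by exactly one.

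Finally I would assemble the bound: the tracked up-run has length $s+t$ at the outset and gains one point on each non-terminal turn, while an up-run of length $m$ ends the game. Thus at most $m - s - t - 1$ turns can fail to end it, so the game lasts at most $m-s-t$ turns, as claimed. The main obstacle is the middle paragraph: the geometric case analysis must be carried out carefully so that each asserted down-run respects the $x$-ordering (this is precisely why one passes to the extremal points $u^*,v^*$), and the degenerate half-barb cases, where $\U$ or $\V$ is empty and $a$ or $b$ becomes $x(w)$ or $x(z)$, should be verified separately, though the same extremal-point argument goes through.
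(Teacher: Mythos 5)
Your argument is correct and is exactly the reasoning the paper intends: the paper states this as an Observation without proof, and the intended justification is precisely your invariant that $\U\cup\V$ is an $(s+t)$-up-run which grows by one on every turn that does not immediately produce a $3$-down-run via one of the triples $w,q,z$ or $w,u^*,q$ or $q,v^*,z$. Your case analysis and the handling of the half-barb degeneracies are sound, so nothing is missing.
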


\begin{definition}\textit{``$w$-Barb" Strategy Mode}\\
Suppose a game of $A_{m,3}$ is underway with the point set $\P = \U \cupdot \{r_1,q_1\} \cupdot \V_1$ such that $\B_1 = \{r_1,q_1\} \cup \V_1$ is a half-barb with spikes $r_1,q_1$ (where $x(r_1) < x(q_1)$) and top wire $\V_1$, such that $\U = \SW(r_1)$, and such that $\U$ is an up-run.
Let $\hat{r}_1 = \argmin_{p \in \NW(q_1)\setminus\{r_1\}} y(p)$.
The parameter $w$ is given as a positive integer.
As player A, do the following:
\begin{enumerate}
\item\label{Bstep:play} Let $i$ be maximum such that $\B_i$ has been defined; play in the column $(x(r_i),x(\hat{r}_i))$.
Player B chooses a row, creating the new point $q$.
Assuming the game is not over, we have $y(q) < y(\hat{r}_i)$.

\item If $q$ is zero notches above $r_i$ with respect to $\B_i$, add $q$ to $\mathcal{V}_i$, then redefine $\hat{r}_i := q$ and go to Step~\ref{Bstep:play}.
Otherwise, $q$ is below $r_i$, say by $d$ notches with respect to $\U$.
If $d \ge w - i$ and $d > 0$, then go to Step~\ref{Bstep:stepdown}.
Otherwise, $d < w - i$ or $d = 0$; go to Step~\ref{Bstep:playbarb}.

\item\label{Bstep:stepdown} Consider $q_i$ a lost point.
Rename $q$ to be $q_{i+1}$, define $r_{i+1} := \argmin_{p \in \NW(q_{i+1})} y(p)$, define $\hat{r}_{i+1} := \argmin_{p \in \NW(q_{i+1})\setminus\{r_{i+1}\}} y(p)$, define $\V_{i+1} : = \V_i \cupdot \NW(q_{i+1}) \setminus \{ r_{i+1} \}$, and define the half-barb $\B_{i+1} := \{r_{i+1},q_{i+1}\} \cupdot \V_{i+1}$.
Return to Step~\ref{Bstep:play}.

\item\label{Bstep:playbarb} Consider $\NW(q)$ as lost points.
Let $\hat{\U} = \SW(q) \cup \{q\}$ and let $\B = \{r_i,q_i\} \cup \hat{\U} \cup \V_i$.
Then $\B$ is a barb with spikes $r_i$ and $q_i$, top wire $\V_i$ and bottom wire $\hat{\U}$; play the barb $\B$ until the game finishes.
\end{enumerate}

We illustrate a step of the $w$-Barb strategy mode in the Appendix.%
\footnote{One can replace the rules for going to Steps~\ref{Bstep:stepdown}~and~\ref{Bstep:playbarb} with rules that compare $d$ to $|\SW(q)|$ rather than comparing $d$ to $w-i$.
This would not improve the result of Lemma~\ref{lemma:m3barbbound}, but would be a better strategy for player~A when player~B does not play optimally.}

\end{definition}

\begin{lemma}\label{lemma:m3barbbound}
If player A adopts the $w$-Barb strategy mode in a game of $A_{m,3}$ with $w \ge \left\lfloor\sqrt{\max\{2|\U|-\frac{15}{4},0\}}+\frac32\right\rfloor +1$, then the game ends after a total of at most $m+w$ turns.
\end{lemma}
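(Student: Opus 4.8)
The plan is to charge the total number of turns against the length $m$ of the up-run that player~A ultimately forces. Player~A's strategy terminates with a single execution of Step~\ref{Bstep:playbarb}, in which an $(s,t)$-barb $\B$ is played out; by the Observation on barbs the game then finishes within $m-s-t$ further turns, and during this final phase every point played extends the combined up-run $\hat\U\cup\V_i$ (otherwise a $3$-down-run through the two spikes is created). Writing $N$ for the number of points on the board at the moment Step~\ref{Bstep:playbarb} is invoked, the game therefore lasts at most $N+(m-s-t)$ turns in all. Since the board at that moment consists of the two wires ($s+t$ points), the two spikes, and a collection of ``lost'' points, we have $N=s+t+2+L$ with $L$ the number of lost points, so the total is at most $m+2+L$ and it suffices to prove $L\le w-2$.

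Next I would account for the lost points. Exactly one point, namely $q_i$, is declared lost each time Step~\ref{Bstep:stepdown} is executed; so if the strategy passes through half-barbs $\B_1,\dots,\B_I$ (that is, Step~\ref{Bstep:stepdown} is used $I-1$ times before the final Step~\ref{Bstep:playbarb}), these contribute $I-1$ lost points. At the final Step~\ref{Bstep:playbarb} the set $\NW(q)$ is lost; I would show that after removing the spike $r_I$ and the points already absorbed into $\V_I$, what remains is exactly the $d$ points of $\U$ lying between $q$ and $r_I$ in the $y$-order, so that $L=(I-1)+d$. The structural fact underpinning this is that each stepdown \emph{absorbs} into the new top wire $\V_{i+1}$ both the old spike $r_i$ and every point of $\U$ passed over, so that these are never lost and the only fresh loss at a stepdown is the single point $q_i$.

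With $L=(I-1)+d$ in hand I would bound $I$ and $d$ by a descent argument. Let $h_i$ be the number of points of $\U$ lying below the current left spike $r_i$, so $h_1=|\U|$; a stepdown at round $i$ moves the spike downward past $d_i\ge w-i$ points of $\U$, consuming them from the count below the spike, and nonnegativity of $h_I$ yields the budget inequality $\sum_{i=1}^{I-1}(w-i)\le|\U|$. The final step then splits into two cases. If Step~\ref{Bstep:playbarb} is reached with $d\ge1$, the guard sending us there (rather than to Step~\ref{Bstep:stepdown}) forces $d<w-I$, i.e.\ $d\le w-I-1$, so $L=(I-1)+d\le w-2$ at once. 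If instead $d=0$, then $L=I-1$ and it remains to show $I\le w-1$; but reaching round $I=w$ would require $w-1$ stepdowns and hence at least $\sum_{i=1}^{w-1}(w-i)=\binom{w}{2}$ points of $\U$, which the hypothesis forbids. Indeed the bound $w\ge\big\lfloor\sqrt{\max\{2|\U|-\tfrac{15}{4},0\}}+\tfrac32\big\rfloor+1$ is calibrated so that $\binom{w}{2}>|\U|$, with the constants $\tfrac{15}{4}$ and $\tfrac32$ absorbing the boundary corrections noted below, forcing $I\le w-1$ and hence $L\le w-2$ here as well.

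The hard part, I expect, is not this counting but the geometric bookkeeping that justifies it. One must verify that the configuration remains a valid (half-)barb after every stepdown, i.e.\ that the new spike $r_{i+1}=\argmin_{p\in\NW(q_{i+1})}y(p)$ and top wire $\V_{i+1}$ satisfy the quadrant and up-run conditions in the definition of a barb, and that $\hat\U=\SW(q)\cup\{q\}$ is a legitimate bottom wire (in particular that $y(q)<y(q_I)$, a point player~B is in fact forced into to avoid an immediate $3$-down-run) so that the Observation applies. A careful treatment of these boundary effects---degenerate columns when $\U$ is nearly exhausted, the status of the auxiliary point $\hat r_i$, and the case $\V_1=\varnothing$---is precisely what produces the small additive constants in the hypothesis on $w$, and pinning these down exactly is the most delicate part of the argument.
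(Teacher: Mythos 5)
Your accounting for the runs of the game that do reach Step~\ref{Bstep:playbarb} is essentially the paper's argument: the points outside the final $m$-up-run are the $i-1$ discarded spikes $q_1,\dots,q_{i-1}$, the two current spikes, and the $d$ points of $\NW(q)\setminus\{r_i\}$, giving $m+d+i+1$ turns, and your descent inequality $\sum_j (w-j)\le|\U|$ is the paper's observation that $|\NW(q_j)\setminus\NW(q_{j-1})|\ge w-j$ with these sets disjoint inside $\U$. The genuine gap is your opening claim that ``player A's strategy terminates with a single execution of Step~\ref{Bstep:playbarb}.'' It need not: player B can answer every probe in the row zero notches above $r_i$, so that each new point is absorbed into $\V_i$ and the loop of Steps~\ref{Bstep:play}--2 simply repeats; after finitely many such answers $\U\cup\{r_i\}\cup\V_i$ is an $m$-up-run and the game ends with Step~\ref{Bstep:playbarb} never invoked. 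Your framework (total $=N+(m-s-t)$ with $N=s+t+2+L$) does not apply to that line of play. The paper treats it as a separate case: the total is then $m+i$, and $i\le w$ is forced by the same budget count, so the omission is repairable with tools you already have --- but as written your proof does not cover a legitimate strategy for player B.

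A smaller caveat on the $d=0$ subcase: you need $I\le w-1$, i.e.\ $|\U|<\binom{w}{2}$, but the hypothesis on $w$ only yields $|\U|<\tfrac12\left((w-\tfrac32)^2+\tfrac{15}{4}\right)=\binom{w-1}{2}+2$, and $\binom{w-1}{2}+2\le\binom{w}{2}$ only when $w\ge3$. So your assertion that the hypothesis ``forbids'' $I=w$ fails at $w=2$, $|\U|=1$ (where one stepdown with $d_1=1$ is available). The paper's proof of its first case relies on $d+1\le w-i$, which has the same blind spot when playbarb is entered with $d=0$ and $i=w$, so this is a shared boundary issue rather than a divergence from the paper; still, since you describe the constants as calibrated to make $\binom{w}{2}>|\U|$, note that they are in fact calibrated only to make $\binom{w-1}{2}+2>|\U|$, which is what the paper's weaker conclusion $i\le w$ requires.
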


\begin{proof}
Since the game would end just as soon if a 3-down-run is formed, we may assume that the game ends once an $m$-up-run is formed.
Observe that if the game ends after reaching Step~\ref{Bstep:playbarb}, then the $d+1$ points in $\NW(q)$ and the $i$ points $q_1, \dots, q_i$ are the only points not in the $m$-up-run containing $\V_i \cup \hat{\U}$.
Thus a total of $m+d+i+1$ turns were taken.
Since $d+1 \le w-i$, at most $m+w$ turns were taken.

If the game ends without ever reaching Step~\ref{Bstep:playbarb}, then it is because $\V_i \cup \U$ is an $m$-up-run containing all points except $q_1, \dots, q_i$, making a total of $m+i$ turns taken.
Now observe that for any $2 \le j < w$ we have $|\NW(q_j) \setminus \NW(q_{j-1})| \ge w-j$ and also that $\mathop{\cdot\hspace*{-7.5pt}\bigcup}_{j=2}^i \Big( \NW(q_j) \setminus \NW(q_{j-1}) \Big) = \left(\bigcup_{j=2}^i \NW(q_j)\right) \setminus \NW(q_{j-1}) \subseteq \U$ for all $i \ge 2$.
Hence if $i = w+1$, then $|\U| \ge \sum_{j=2}^i \Big| \NW(q_j) \setminus \NW(q_{j-1}) \Big| \ge 1+1+\sum_{j=2}^{w-1} (w-j) = \frac{(w-\frac{3}{2})^2 +\frac{15}{4}}{2}$.
Since $w > \sqrt{\max\{2|\U|-\frac{15}{4},0\}}+\frac32$, this implies $|\U| \ge\frac{(w-\frac{3}{2})^2 +\frac{15}{4}}{2} > |\U|$, a contradiction.
Thus $i \le w$ always, and therefore at most $m+w$ turns are taken in this case as well.
\end{proof}

\begin{definition}\label{def:Astrategy}\textit{$f$-Combined Strategy}\\
Let $f$ be given.
As player A in the game $A_{m,3}$, do the following:
\begin{enumerate}
\item\label{Cstep:middling} (Middling Mode) Begin by playing the $f$-Middling strategy mode.
Play until the game ends or until the strategy mode terminates, making note of $\N$.

\item\label{Cstep:transitiona} (Transition) If the $f$-Middling mode strategy mode terminated, then in the active segment $(a_x,b_x) \times (a_y,b_y)$ we have an up-run $\S$ and a point $q$ in a middlemost column $(x(p_\ell),x(p_r))$ with respect to $\S$, where $|\NE(q) \cap \S| \le f_\t$ or $|\SW(q) \cap \S| \le f_\t$ and without loss of generality we may assume $y(q) < y(p_\ell)$.\\
If $q$ is at least one notch below $p_\ell$ with respect to $\S$, then let $\U := \SW(q) \cap \S$, $r_1 := \argmin_{p \in \NW(q) \cap \S} y(p)$, $q_1 := q$, $\V_1 := \NE(r_1) \cap \S$ and go to Step~\ref{Cstep:barb}.\\
Else, $q$ is zero notches below $p_\ell$ with respect to $\S$; proceed to Step~\ref{Cstep:transitionb}.

\item\label{Cstep:transitionb} (Transition) Play in the column $(x(p_\ell),x(q))$.
Player B chooses a row, creating the new point $\hat{q}$.
Assuming the game is not over, we have $a_y < y(\hat{q}) < b_y$ and may assume without loss of generality that either $\hat{q}$ is zero notches above $p_\ell$ with respect to $\S$ or at least one notch below $q$ with respect to $\S$.\\
If the former, then let $\U := \SW(p_\ell) \cap \S$, $r_1 := p_\ell$, $q_1 := q$, $\V_1 := \NE(p_\ell) \cap \S$. Also add $\hat{q}$ to $\V_1$. Go to Step~\ref{Cstep:barb}.\\
If the latter, then $\hat{q}$ is say $k$ notches below $q$ with respect to $\S$.
Then let $\U := \SW(\hat{q}) \cap \S$, $r_1 := \argmin_{p \in \NW(\hat{q}) \cap \S} y(p)$, $q_1 := \hat{q}$, and $\V_1 := \NE(r_1) \cap \S$.
Also add $q$ to $\W$.
Go to Step~\ref{Cstep:barb}.

\item\label{Cstep:barb} (Barb Mode) Given $\U$, $r_1$, $q_1$ and $\V_1$, play $A_{m-|\N|,3}$ by adopting the $w$-Barb strategy mode in $(a_x,b_x) \times (a_y,b_y)$ with $w = \left\lfloor\sqrt{\max\{2|\U|-\frac{15}{4},0\}}+\frac32\right\rfloor +1$ until the game ends.
\end{enumerate}

\end{definition}

\begin{lemma}\label{lemma:m3UB}
Suppose player A uses the $f$-Combined Strategy in the game $A_{m,3}$ with $f=\{f_i\}_{i=1}^\infty$.
Suppose also that there exists $T \ge 2$ meeting the following conditions:
\begin{enumerate}
\item\label{cond:transition} $\sum_{i=1}^T (f_i+2) +(f_T+1) \ge m$, and

\item\label{cond:barb} $i + \left\lfloor\sqrt{\max\{2|\U|-\frac{15}{4},0\}}+\frac32\right\rfloor \le T$ for all $i \in [T-1]$.
\end{enumerate}

Then the game ends after a total of at most $m+T+1$ turns.
\end{lemma}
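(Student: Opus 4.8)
The plan is to track the three phases of the $f$-Combined Strategy separately and to reduce the whole count to one inequality. The key bookkeeping device is to split the points on the board when the game ends into those lying \emph{outside} the final active segment $(a_x,b_x)\times(a_y,b_y)$ and those lying inside it. The outside points are exactly the saved set $\N$ together with the wasted set $\W$. The inside points are precisely the board of the sub-game $A_{m-|\N|,3}$ that the Barb Mode (Step~\ref{Cstep:barb}) plays: one checks that $\S=\U\cupdot\{r_1\}\cupdot\V_1$ at the Transition, so nothing inside the segment is unaccounted for. Applying Lemma~\ref{lemma:m3barbbound} to that sub-game, whose parameter is $w=\lfloor\sqrt{\max\{2|\U|-\frac{15}{4},0\}}+\frac32\rfloor+1$, bounds the inside count by $(m-|\N|)+w$. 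Hence the total number of turns is at most $|\N|+|\W|+\big((m-|\N|)+w\big)=m+w+|\W|$, and the entire proof reduces to showing $w+|\W|\le T+1$.

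First I would analyze the Middling Mode to control $|\W|$ and the value $\tau$ of $\t$ at which the mode terminates. Each non-terminating non-extension (Steps~\ref{Mstep:pointright} and~\ref{Mstep:pointleft}) adds exactly one point to $\W$, so $|\W|=\tau-1$ except for at most one extra wasted point created during the Transition; thus $|\W|\le\tau$. The crucial structural claim is that a non-terminating step at $\t=j$ moves at least $f_j+2$ points from $\S$ into $\N$: since the step does not terminate we have $|\NE(q)\cap\S|>f_j$ and $|\SW(q)\cap\S|>f_j$, and (in the case $q$ above $p_r$) the saved set $(\SW(\hat q)\cup\{\hat q\})\cap\S$ contains all of $\SW(q)\cap\S$ together with $\hat q$, giving at least $f_j+2$. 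Because $\N\cup\S$ is always an up-run we have $|\N|+|\S|\le m-1$ throughout. If the mode ever performed a full non-terminating step at $\t=T$, then $|\N|\ge\sum_{j=1}^{T}(f_j+2)$ while the new active up-run retains at least $f_T+1$ points, so $|\N|+|\S|\ge\sum_{j=1}^{T}(f_j+2)+(f_T+1)\ge m$ by Condition~\ref{cond:transition}, a contradiction. Hence $\tau\le T$.

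With $\tau\le T$ in hand I would invoke Condition~\ref{cond:barb}. After the harmless $180^\circ$ symmetry that places the small quadrant in the southwest (as in Step~\ref{Cstep:transitiona}), the bottom wire satisfies $|\U|\le f_\tau$, so for $\tau\le T-1$ the condition applied with $i=\tau$ gives $\tau+(w-1)\le T$, that is $w+\tau\le T+1$. Combined with $|\W|\le\tau$ this yields $w+|\W|\le w+\tau\le T+1$, and therefore a total of at most $m+T+1$ turns, exactly as claimed (the single unit of slack in $|\W|\le\tau$ being precisely the extra wasted point produced by the Transition sub-case in Step~\ref{Cstep:transitionb}).

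The main obstacle I anticipate is the boundary case $\tau=T$, which Condition~\ref{cond:barb} does not cover directly, since there Total $\le m+w+(T-1)=m+T+(w-1)$ and one needs $w\le 2$. Here I would argue from the committed up-run rather than from Condition~\ref{cond:barb}: at termination $\N$ together with the wire already commit $\sum_{j=1}^{T-1}(f_j+2)+(f_\tau+1)$ points to the eventual $m$-up-run, so by Condition~\ref{cond:transition} the sub-game target $m-|\N|$ exceeds the initial up-run length $|\S|$ by only a small amount; this should force the quantities $d$ and $i$ appearing in the proof of Lemma~\ref{lemma:m3barbbound} to satisfy $d+i\le 1$, keeping the total at most $m+T+1$. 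Making this boundary accounting line up with the stated slack — while also verifying the $|\U|\le f_\tau$ identification and the geometric step-bound across all three Transition branches (Steps~\ref{Cstep:transitiona}--\ref{Cstep:transitionb}) — is where the real care is required.
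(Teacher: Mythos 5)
Your decomposition is the paper's: the points outside the final active segment are exactly $\N\cup\W$, the inside is the sub-game controlled by Lemma~\ref{lemma:m3barbbound}, the Middling accounting ($f_j+2$ points banked in $\N$ and $f_j+1$ retained in $\S$ per full step, hence $\t\le T$ via Condition~\ref{cond:transition} and $|\N|+|\S|\le m-1$), and Condition~\ref{cond:barb} closing the count via $\t+w\le T+1$. For $\t\le T-1$ your argument is complete and matches the paper's main case essentially line by line; your verification that $\S=\U\cupdot\{r_1\}\cupdot\V_1$ and that a full Middling step banks at least $f_j+2$ points is correct.

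The gap is the boundary case $\t=T$, which you correctly isolate but do not resolve. The paper dispatches it by asserting that $\t<T$ whenever Step~\ref{Cstep:transitiona} is reached, so that Condition~\ref{cond:barb} always applies with $i=\t$ (you have in fact put your finger on the tersest point of the paper's own argument, since its Transition case only derives $\t\le T$). Your proposal instead concedes $\t\le T$ and offers the heuristic that the sub-game quantities ``should'' satisfy $d+i\le1$. That is not a proof, and the arithmetic does not close on its own: if $\t=T$, all you know entering Barb Mode is $|\U|\le f_T$, and Lemma~\ref{lemma:m3barbbound} gives only the sub-game bound $(m-|\N|)+w$ with $w\ge\lfloor\frac32\rfloor+1=2$, so your total $m+w+|\W|$ lands at $m+T+2$, one more than claimed. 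To finish you must either prove that reaching the Transition forces $\t<T$ (the paper's route), or genuinely sharpen the Barb-Mode count when the residual target $m-|\N|$ is small; neither is carried out. A smaller omission: the cases in which the game ends during the Middling or Transition steps are never stated, though they follow at once from your bookkeeping $|\W|\le\t\le T$.
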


\begin{proof}
Suppose the game has ended, again assuming that it was not because a 3-down-run was formed.
We proceed by case analysis.

Suppose the game ended while still in Step~\ref{Cstep:middling}.
This means that each time $\t$ was incremented in the $f$-Middling strategy mode (say to $j$), at least $f_j+2$ points were added to $\N$ from $\S$ ($\SW(\hat{q}) \cup \{\hat{q}\}$ or $\NE(\hat{q}) \cup \{\hat{q}\}$) and at least $f_j+1$ points were kept in $\S$.
Since there was no $m$-up-run at this time, we have $\sum_{i=1}^j (f_i+2) +(f_j+1) \le |\N|+|\S| < m$.
Since this is true for all $j \in [\t]$ but not for $j=T$ (by Condition~\ref{cond:transition}), we have $\t < T$.
Since the points played in the game were either in $\N \cup \S$ which now form an $m$-up-run or the $\t$ points in $\W$, we have $m+\t$ points played.
Thus at most $m+T-1$ turns were taken.

It is also possible that the game ended while in Steps~\ref{Cstep:transitiona}~and~\ref{Cstep:transitionb}, specifically when the last point $\hat{q}$ was played in the ``former" case of Step~\ref{Cstep:transitionb}.
Again, the total number of points played is $m+\t$.
Observe that if Step~\ref{Cstep:transitiona} is reached, then $\t \le T$.
This is because $\sum_{i=1}^{\t-1} (f_i+2) +(f_{\t-1}+1) < m$, which cannot be true for $\t = T+1$ by Condition~\ref{cond:transition}.
Thus at most $m+T$ turns were taken.

Finally, suppose the game ended while in Step~\ref{Cstep:barb}.
First note that all points played outside of $(a_x,b_x) \times (a_y,b_y)$ were played before reaching Step~\ref{Cstep:transitiona} and totaled $|\N|+|\W|$, where $|\W|=\t-1$ because we exited the $f$-Middling strategy mode.
All points played within $(a_x,b_x) \times (a_y,b_y)$ were either a single point added to $\W$ in Step~\ref{Cstep:transitionb} or part of the subgame $A_{m-|\N|,3}$ in Step~\ref{Cstep:barb}.
By Lemma~\ref{lemma:m3barbbound}, the subgame took at most $m-|\N|+ \left\lfloor\sqrt{\max\{2f_\t -\frac{15}{4},0\}}+\frac32\right\rfloor +1$ turns since we had $|\U| \le f_\t$ upon entering Step~\ref{Cstep:barb}.
Altogether, then, $(|\N|+\t-1)+(1)+(m-|\N|+\left\lfloor\sqrt{\max\{2f_\t -\frac{15}{4},0\}}+\frac32\right\rfloor +1) = m+\t+\left\lfloor\sqrt{\max\{2f_\t -\frac{15}{4},0\}}+\frac32\right\rfloor +1$ points were played.
As noted in a previous case, $\t < T$ because we reached Step~\ref{Cstep:transitiona}.
Thus by Condition~\ref{cond:barb} at most $m + T +1$ turns were taken.
\end{proof}

\begin{corollary}\label{cor:m3UB}
Let $m \ge 3$ be given, and let $T$ be the least integer satisfying the inequality $(T-1)^3 +17(T-1) \ge 6(m-3)$.
Then $\ESO(m,3) \le m + T +1$.
\end{corollary}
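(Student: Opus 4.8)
The plan is to instantiate Lemma~\ref{lemma:m3UB} with a carefully chosen parameter sequence $f=\{f_i\}_{i=1}^\infty$ together with the given $T$, so that all that remains is to verify Conditions~\ref{cond:transition} and~\ref{cond:barb}. The sequence I would use is
\[
f_i = \binom{T-i}{2}+1 \qquad (1 \le i \le T),
\]
where $\binom{0}{2}=\binom{1}{2}=0$, and any values (say $f_i=1$) for $i>T$ — the latter are never consulted, since the proof of Lemma~\ref{lemma:m3UB} keeps the counter $\t$ below $T$. This $f_i$ is exactly the largest value compatible with Condition~\ref{cond:barb}: it is designed so that the barb overhead incurred when the $f$-Middling mode terminates with counter $\t=i$ is just small enough to finish the game by turn $m+T+1$.

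For Condition~\ref{cond:barb}, first observe that the $f$-Middling mode guarantees $|\U|\le f_i$ whenever $\t=i$ on entering Barb Mode, and the left-hand side of Condition~\ref{cond:barb} is nondecreasing in $|\U|$; hence it suffices to check the inequality with $f_i$ substituted for $|\U|$. Writing $d=T-i\ge 1$, we have $2f_i-\tfrac{15}{4}=d^2-d-\tfrac74$. For $d=1$ this is negative, the $\max$ returns $0$, and the floor equals $\lfloor\tfrac32\rfloor=1=d$. For $d\ge 2$ it is positive, and since $-\tfrac74<\tfrac14$ we get $\sqrt{d^2-d-\tfrac74}<d-\tfrac12$, whence $\lfloor\sqrt{d^2-d-\tfrac74}+\tfrac32\rfloor\le d$. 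In either case $i+\lfloor\cdots\rfloor\le i+d=T$, so Condition~\ref{cond:barb} holds for all $i\in[T-1]$.

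For Condition~\ref{cond:transition}, the hockey-stick identity $\sum_{i=1}^{T}\binom{T-i}{2}=\sum_{j=0}^{T-1}\binom{j}{2}=\binom{T}{3}$ gives
\[
\sum_{i=1}^{T}(f_i+2)+(f_T+1)=\binom{T}{3}+3T+2\ \ge\ \binom{T}{3}+3T=\frac{(T-1)^3+17(T-1)+18}{6}.
\]
The defining inequality $(T-1)^3+17(T-1)\ge 6(m-3)$ is precisely the assertion that this last expression is at least $m$, so Condition~\ref{cond:transition} holds. (Taking $T$ to be the least such integer keeps the bound as strong as possible; only the single case $m=3$ yields $T=1$, where one replaces $T$ by $2$ to meet the hypothesis $T\ge 2$, which merely weakens the bound.) With both conditions verified, Lemma~\ref{lemma:m3UB} yields $\ESO(m,3)\le m+T+1$.

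The main obstacle is the joint optimization hidden in the choice of $f$: Condition~\ref{cond:barb} caps each $f_i$ from above — a larger potential $\U$ forces a longer, $\Theta(\sqrt{|\U|})$, barb phase — while Condition~\ref{cond:transition} wants the $f_i$ as large as possible so that the middling phase alone can assemble the $m$-up-run by turn $T$. Solving the floor/square-root inequality to identify the extremal admissible value $f_i=\binom{T-i}{2}+1$ is the crux; once that is in hand, the cubic in the statement is forced by the hockey-stick summation, and everything else is bookkeeping.
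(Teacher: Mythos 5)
Your proposal is correct and follows essentially the same route as the paper: the paper instantiates Lemma~\ref{lemma:m3UB} with $f_i = \frac{(T-\frac12-i)^2+\frac74}{2}$ for $i\le T-1$, which is exactly your $\binom{T-i}{2}+1$, and verifies the two conditions by the same summation (the paper just asserts Condition~\ref{cond:barb} as "maximality by definition" where you check the floor/square-root inequality explicitly, and sets $f_T=0$ rather than $1$, an immaterial difference).
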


\begin{proof}
Define $f = \{f_i\}_{i=1}^\infty$ by $f_i = \frac{(T-\frac12-i)^2+\frac74}{2}$ for $i \le T-1$ and by $f_i = 0$ for $i \ge T$.
Observe that $f$ satisfies Condition~\ref{cond:barb} from Lemma~\ref{lemma:m3UB} by definition since the the first $T-1$ terms are the maximum integers satisfying the inequality condition.
To see that $f$ satisfies Condition~\ref{cond:transition}, it suffices to observe that $\sum_{i=1}^{T-1} (f_i+2) = \frac16 \left[ (T-1)^3 +17(T-1) \right]$.
This implies $\sum_{i=1}^T (f_i+2) +(f_T +1) = \frac16 \left[ (T-1)^3 +17(T-1) \right] +3 \ge m$.
\end{proof}

Applying $T = \lceil \sqrt[3]{6(m-3)} \rceil +1$ to Corollary~\ref{cor:m3UB} yields 
\[
\ESO(m,3) \le m + \lceil \sqrt[3]{6(m-3)} \rceil +2 < m + \sqrt[3]{6(m-3)} +3 < m + \sqrt[3]{6m} +3.
\]

\section{Establishing a Tighter Lower Bound for $\ESO(m,3)$}\label{sec:m3LB}

To provide a lower bound for $\ESO(m,3)$, we now give a strategy for player B.
First, we provide some technical definitions.

\begin{definition}\textit{$h(x)$, $h^{\uparrow n}(x)$, $h_{\downarrow n}(x)$ given $\C$, $c$, and $d$}\\
Consider a real number $x \in (0,1)$, an integer $n$, an up-run $\C$ with distinct $y$-values, and real numbers $0 \le c < d \le 1$.

We define $h(x)$ to be a $y$-value such that $c<y<d$ and such that $\C \cup\{(x,y)\}$ is an up-run with distinct $y$-values.
Let $\C = \{ p_1, \dots, p_t \}$ and let $p_0 = (0,0)$ and $p_{t+1} = (1,1)$.
Let $j = \max\{ i : x(p_i) < x\}$.
If $c \ge y(p_{j+1})$ or $d \le y(p_j)$, then $h(x)$ is undefined.
Otherwise, let $h(x) = \max\big\{c, y(p_j)\big\} + \big(x-x(p_j)\big) \dfrac{\min\big\{d, y(p_{j+1})\big\} -y(p_j)}{x(p_{j+1}) - \max\big\{c, y(p_j)\big\} }$.

We define $h^{\uparrow n}(x)$ to be a $y$-value $n$ notches above $\big( x, h(x) \big)$ with respect to $\C$ such that $c \le y < d$.
Let $(a,b)$ be the row $\max\{0,n\}$ notches above $\big( x, h(x) \big)$ with respect to $\C$.
If this row does not exist or if $a \ge d$, then $h^{\uparrow n}(x)$ is undefined.
Otherwise, we define $h^{\uparrow n}(x) = \max \left\{ c, ~ \dfrac{a + \min\{b,d\} }{2} \right\}$.

Let $h_{\downarrow n}(x)$ be defined similarly, but to be a $y$-value $n$ notches below $\big( x, h(x) \big)$ with respect to $\C$ such that $c < y \le d$.
\end{definition}

\begin{definition}\textit{$w$-Fracturing Strategy}\\
Let $w$ be a given positive integer.
In a game of $A_{m,3}$, follow the steps below as player B.
Player B will decide whether to play on the line $y=x$ to extend a central up-run $\C$ or to play above (\textit{fracturing left}) or below (\textit{fracturing right}) $\C$.
When doing the latter, player B creates \textit{wastebins} $\W_j$ and corresponding intervals $W_j$.
The sets $\B_j$ are the \textit{banked up-runs}, the portions of the central up-run $\C$ which become fixed when player B fractures left or right.
The $x$-values $a \le a' \le a'' \le b'' \le b' \le b$ are used to decide whether to add to $\C$, create a new wastebin, or add to an existing wastebin.
The $y$-values $c<d$ are used to ensure the wastebins form an up-run.
The function $\varphi$ encodes the decision that player B makes.

\begin{enumerate}
\setcounter{enumi}{-1}

\item (Initialize)
Define $z_j = \frac{(w-j)(w-j+1)}{2} +1$ for $1 \le j \le w-1$ and $z_j = 1$ for $j \ge w$.
Let $a = a' = a'' = 0 = c$, let $b'' = b' = b = 1 = d$, and let $\C = \C_1 = \varnothing$.
Define $\varphi(x) := x$ for all $x \in (0,1)$, and let $i = 1$.

\item\label{Fstep:nextmove} (Choose $y$)
Assuming the game is not over, player A chooses an $x$-value, say $\hat{x}$.
Choose the $y$-value $\varphi(\hat{x})$ to make the point $q$.
If $\hat{x} < a$ or $b < \hat{x}$, then go to Step~\ref{Fstep:previousfracture}.
If $a < \hat{x} < a''$, then go to Step~\ref{Fstep:leftfracture}.
If $b'' < \hat{x} < b$, then go to Step~\ref{Fstep:rightfracture}.
Else, $a'' < \hat{x} < b''$; add $q$ to $\C$ and $\C_i$ and go to Step~\ref{Fstep:update2}.

\item\label{Fstep:previousfracture} (Previous fracture)
In this case, we must have $\hat{x} \in W_j$ for some $j \in [i-1]$.
Add $q$ to $\W_j$ and return to Step~\ref{Fstep:nextmove}.

\item\label{Fstep:leftfracture} (Fracture left)
Let $\ell = \min_{p \in \SE(q)} x(p)$ and $r = \max_{p \in \SE(q)} x(p)$.
Let $W_i = (a, \ell )$, let $\W_i = \{q\}$, and let $\B_i = \{ p \in \C_i : x(p) \le r \}$.
Then redefine $a = \ell$, $a' = r$, and $c = y(q)$.
Go to Step~\ref{Fstep:update1}.

\item\label{Fstep:rightfracture} (Fracture right)
Let $\ell = \min_{p \in \NW(q)} x(p)$ and $r = \max_{p \in \NW(q)} x(p)$.
Let $W_i = (r, b )$, let $\W_i = \{q\}$, and let $\B_i = \{ p \in \C_i : \ell \le x(p) \}$.
Then redefine $b' = \ell$, $b = r$, and $d = y(q)$.
Go to Step~\ref{Fstep:update1}.

\item\label{Fstep:update1} (Begin updating)
Increment $i$.
Define $\C_i = \C_{i-1} \setminus \B_{i-1}$.
Go to Step~\ref{Fstep:update2}.

\item\label{Fstep:update2} (Finish updating)
Redefine $a''$ to be the maximum value of $x(p)$ from $p \in \C_i$ such that $|\NE(p) \cap \C_i| \ge z_i -1$, or to be $a'$ if there is no such $p$.
Similarly, redefine $b''$ to be the minimum value of $x(p)$ from $p \in \C_i$ such that $|\SW(p) \cap \C_i| \ge z_i -1$, or to be $b'$ if there is no such $p$.
Now, we redefine $\varphi$ on $(a,b)$:
\[
\varphi(x) = 
\begin{cases}
h^{\uparrow (w-i+1)}(x), & a < x < a''\\
h(x), & a'' < x < b''\\
h_{\downarrow (w-i+1)}(x), & b'' < x < b.
\end{cases}
\]
The definition of $\varphi$ on $(0,a) \cup (b,1)$ remains unchanged.
Return to Step~\ref{Fstep:nextmove}.

\end{enumerate}

\end{definition}

\begin{lemma}\label{lem:fracturing}
If player B uses the $w$-Fracturing Strategy in a game of $A_{m,k}$, then we have the following:
{ \renewcommand\labelenumi{(\theenumi)}
\begin{enumerate}
\item\label{lem:fracturing:CUB} $|\C_j| \le 2z_j -1$ always for all $j$.
\item\label{lem:fracturing:BUB1} If $j<w$ and $\B_j$ is defined when $a < \hat{x} < a'$ or $b' < \hat{x} < b$, then $|\B_j| \le w-j$.
\item\label{lem:fracturing:BUB2} If $j<w$ and $\B_j$ is defined when $a' < \hat{x} < a''$ or $b'' < \hat{x} < b'$, then $|\B_j| \le z_j +w -j$.
\item\label{lem:fracturing:CLB} After redefining $a''$ and $b''$ in Step~\ref{Fstep:update2}, if $i \le w$ and $a < a''$ or $b'' < b$ then $|\C_i| \ge z_i$.
\item\label{lem:fracturing:WellDefined} The $w$-Fracturing Strategy is well-defined.
\item\label{lem:fracturing:No3down} The game does not end with a 3-down-run.
\end{enumerate}
}
\end{lemma}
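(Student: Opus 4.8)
The plan is to establish all six parts at once by induction on the number of turns played, propagating a bundle of structural invariants that the $w$-Fracturing Strategy maintains. First I would record the elementary identity that $z_j$ is non-increasing with $z_j-z_{j+1}=w-j$ for $1\le j\le w-1$, and $z_j=1$ for $j\ge w$. The invariants I would then carry are: (i) $\C$ is an up-run with distinct $y$-values that splits as $\C=\C_i\cupdot\B_1\cupdot\cdots\cupdot\B_{i-1}$, where $\C_i$ is precisely the part of $\C$ whose $x$-values lie in $(a,b)$; (ii) the chain $a\le a'\le a''\le b''\le b'\le b$ and $c<d$ hold, and every point of $\C_i$ lies in $(a,b)\times(c,d)$; and (iii) the wastebin intervals $W_1,\dots,W_{i-1}$ are pairwise disjoint, lie in $(0,a)\cup(b,1)$, and each $\W_j$ is an up-run. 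Propagating (i)--(iii) through the four move types (central addition, fracture left, fracture right, previous fracture) is mostly bookkeeping; the content lies in the quantitative parts.

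For part~(\ref{lem:fracturing:CUB}) I would argue straight from Step~\ref{Fstep:update2}. Because $\C_i$ is an up-run, $|\NE(p)\cap\C_i|$ is the number of points of $\C_i$ strictly right of $p$ and $|\SW(p)\cap\C_i|$ the number strictly left; hence $a''$ is the $x$-value of the point of $\C_i$ that is $z_i$-th from the right (when it exists) and $b''$ that of the point $z_i$-th from the left. Were $|\C_i|$ to reach $2z_i-1$, its median point $p^\ast$ would have exactly $z_i-1$ points on each side, forcing $a''=b''=x(p^\ast)$ and collapsing the central interval $(a'',b'')$, after which no move can be routed to a central addition. Since $\C_i$ only grows by central additions (it is replaced by the smaller $\C_{i+1}$ at each fracture), it can never exceed $2z_i-1$.

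Parts~(\ref{lem:fracturing:BUB1})--(\ref{lem:fracturing:CLB}) hinge on one observation: the notch functions are read off the \emph{whole} central run $\C$, whereas $\B_i$ is harvested only from the \emph{active} run $\C_i$. When player B fractures left, $q$ sits just above the next $w-i+1$ central points to the right of $\hat x$, so $r=\max_{p\in\SE(q)}x(p)$ is the $x$-value of the $(w-i+1)$-th of them and $\B_i=\{p\in\C_i : x(p)\le r\}$ captures only the active points up to $r$. In the case $\hat x\in(a,a')$ of part~(\ref{lem:fracturing:BUB1}), invariant~(i) places all of $\C_i$ to the right of $a'$, so $\hat x$ falls inside the previously banked block and those banked points shield the active run: enough of the $w-j+1$ swept slots are occupied by already-banked points that at most $w-j$ active points remain, giving $|\B_j|\le w-j$. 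In the case $\hat x\in(a',a'')$ of part~(\ref{lem:fracturing:BUB2}), $\hat x$ is strictly left of the point realizing $a''$, so at least $z_i$ points of $\C_i$ lie to its right; with $|\C_i|\le 2z_i-1$ from part~(\ref{lem:fracturing:CUB}) at most $z_i-1$ active points lie to its left, and adding the up to $w-i+1$ points swept to the right yields $|\B_j|\le z_j+w-j$. Part~(\ref{lem:fracturing:CLB}) I would obtain from the defining property of $a''$ and $b''$ together with the across-a-fracture relation $|\C_i|=|\C_{i-1}|-|\B_{i-1}|$ and the identity $z_{i-1}-z_i=w-i+1$; the symmetric statements about $b'',b',b$ follow by the reflection interchanging fracture left and fracture right.

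Parts~(\ref{lem:fracturing:WellDefined}) and~(\ref{lem:fracturing:No3down}) should then follow. For well-definedness, the ordering $a\le a'\le a''\le b''\le b'\le b$ makes the three branches of $\varphi$ tile $(a,b)$, while invariant~(ii) keeps $\C_i$ inside $(c,d)$ so that $h(\hat x)$ is defined; part~(\ref{lem:fracturing:CLB}) then guarantees that whenever a fracturing region is nonempty there are enough central points on the relevant side that the row $w-i+1$ notches up (resp.\ down) exists and stays inside $(c,d)$, so $h^{\uparrow(w-i+1)}$ (resp.\ $h_{\downarrow(w-i+1)}$) is defined. For the no-$3$-down-run property I would track that the played points fall into three bands---left-high (left spikes and left wastebins), central (the up-run $\C$), and right-low (right spikes and right wastebins)---which the updates of $c$ and $d$ progressively separate; a down-run meets the central band in at most one point because $\C$ is increasing, and the crucial geometric fact to verify is that the notch placement leaves no central point in the open box strictly between a left spike and a right spike (the barb condition $\max_{u\in\U}y(u)<y(z)<y(w)<\min_{v\in\V}y(v)$), so a high point, a central point and a low point can never chain into a down-run of length three. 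I expect the main obstacle to be the tight, coupled bookkeeping in parts~(\ref{lem:fracturing:CUB})--(\ref{lem:fracturing:CLB}): the bounds have no slack, so one must be meticulous about strict versus non-strict inequalities and, above all, about separating active points of $\C_i$ from banked points of $\C$ (and from wastebin points) when evaluating $\SE(q)$ and $\NW(q)$---it is exactly the banked points, invisible to $\B_i$ but seen by the notch functions, that force the different bounds in~(\ref{lem:fracturing:BUB1}) and~(\ref{lem:fracturing:BUB2}) and that make choosing the correct inductive invariant for~(\ref{lem:fracturing:CLB}) the delicate step.
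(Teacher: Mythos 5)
Your treatment of parts (1)--(5) tracks the paper's own proof closely: the same reading of $a''$ and $b''$ as the $z_i$-th points of the up-run $\C_i$ from the right and from the left for (1); the same observation for (2) that when $a<\hat{x}<a'$ at least one of the $w-j+1$ swept points of $\C$ already lies in an earlier $\B_{j'}$, leaving at most $w-j$ for $\B_j$; the same $(z_j-1)+(w-j+1)$ split for (3); the same induction via $|\C_i|=|\C_{i-1}|-|\B_{i-1}|\ge z_{i-1}-(w-i+1)=z_i$ for (4); and the same two-case verification that the required notch rows exist for (5).

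Part (6) is where you diverge, and where your plan has a concrete hole. The paper proves (6) by showing that the whole point set partitions into \emph{two} up-runs, namely $\C$ and $\bigcup_j \W_j$; any three points then contain two lying in a common up-run, and those two cannot both belong to a down-run. Your three-band case analysis only excludes a $3$-down-run that meets $\C$ twice or that takes exactly one point from each band; it never rules out a $3$-down-run containing two wastebin points drawn from \emph{different} wastebins (two left wastebins, two right wastebins, or one of each). Your invariant (iii) only makes each individual $\W_j$ an up-run, which is not enough for those cases. Closing them requires exactly the paper's stronger statement that all wastebin points together form a single up-run --- in particular that every left-fracture point lies strictly below every right-fracture point, which is what the running updates of $c$ and $d$ together with the invariant $c<d$ deliver. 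Once you have that, the ``crucial geometric fact'' you propose to verify (no central point in the box between a left and a right fracture point) is vacuous: a left wastebin point and a right wastebin point already form a $2$-up-run, so no down-run contains one of each and there is no such box to inspect. (Also, ``spikes'' and the barb inequalities belong to player A's strategy in Section~4 and play no role in this lemma.)
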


\begin{proof}
(\ref{lem:fracturing:CUB})
Every turn, $\C_i$ is newly defined or increases by at most one.
If $|\C_i| = 2z_i-1$, then $a'' = b''$ and no more points can be added to $\C_i$.
Once $\C_{j+1}$ is defined, the set $\C_j$ never changes.

(\ref{lem:fracturing:BUB1})
Suppose without loss of generality that $\B_j$ is defined when $a < \hat{x} < a'$.
Since $\hat{x} < a'$, all points in $\B_j$ are to the right of $q$, and there is at least one point in $\SE(q) \cap \C$ in $\B_{j'}$ for some $j' < j$.
Since $q$ was placed $w-j+1$ notches above $\big( x, h(x) \big)$, we have $|\B_j| = |\SE(q) \cap \C_j| \le (w-j+1)-1 = w-j$.

(\ref{lem:fracturing:BUB2})
Suppose without loss of generality that $\B_j$ is defined when $a' < \hat{x} < a''$.
Since $a' < \hat{x}$, there may be some points in $\B_j$ to the left of $q$; by (\ref{lem:fracturing:CUB}) there are at most $z_j -1$ of these.
Together with the $w-j+1$ points in $\B_j$ to the right of $q$, we have $|\B_j| \le (z_j -1) + (w-j+1) = z_j +w-j$.

(\ref{lem:fracturing:CLB})
If $a' < a''$ or $b'' < b'$, then $|\C_i| \ge z_i$ by redefinition of $a''$ and $b''$.
We now proceed by induction on $i$.
When $i=1$, we must have $a' < a''$ or $b'' < b'$ since $a=a'=0$ and $b'=b=1$.
So suppose $2 \le i \le w$ and that $|\C_j| \ge z_j$ for all $j <i$.
Since the size of $\C_i$ never decreases while $i$ is constant, suppose that $\C_i$ has just been defined in Step~\ref{Fstep:update1} after the point $q$ has been played.
If $a' < x(q) < a''$ or $b'' < x(q) < b'$, then we are done; suppose that $a < x(q) < a'$ or $b' < x(q) < b$.
Then by (\ref{lem:fracturing:BUB1}) we have $|\C_i| = |\C_{i-1} \setminus \B_{i-1}| \ge z_{i-1} - [w-(i-1)] = z_i$.

(\ref{lem:fracturing:WellDefined})
It suffices to show that in Step~\ref{Fstep:update2}, $h^{\uparrow(w-i+1)} (x)$ is defined for all $a < x < a''$.
If $a<x<a'$, then $h^{\uparrow(w-i+1)} (x)$ is defined if $|\C_i| \ge w-i$.
Since $z_j > w-j$ for all $j$, this follows from (\ref{lem:fracturing:CLB}).
If $a'<x<a''$, then $h^{\uparrow(w-i+1)} (x)$ is defined if $|\NE\big(x, h(x)\big) \cap \C_i| \ge w-i+1$.
Since $z_j \ge w-j+1$ for all $j$, this follows from the definition of $a''$.

(\ref{lem:fracturing:No3down})
We show that $\P$ can be partitioned into two up-runs $\C$ and $\bigcup_{j=1}^i \W_j$.
Since $\varphi$ is nondecreasing on $W_j$ when $W_j$ is defined and is not redefined on $W_j$, each wastebin $\W_j$ is an up-run.
The use of $c$ and $d$ in the definitions of $h^{\uparrow n} (x)$ and $h_{\downarrow n} (x)$, including the fact that $c < d$ always, guarantees that $\bigcup_{j=1}^i \W_j$ is an up-run.
\end{proof}

\begin{proposition}\label{prop:lowerbound}
If player B adopts the $w$-Fracturing Strategy in a game of $A_{m,3}$ where $(w+1)^3 -(w+1) < 6m$, then at least $m+w$ total points will be played.
\end{proposition}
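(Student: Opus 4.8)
The plan is to track the two structures the $w$-Fracturing Strategy maintains — the central up-run $\C$ and the wastebins $\W_1,\dots,\W_i$ — and to show that when the game ends the total point count $|\C| + \sum_j |\W_j|$ is at least $m+w$. By Lemma~\ref{lem:fracturing}(\ref{lem:fracturing:No3down}) the game cannot terminate with a $3$-down-run, so it ends exactly when some $m$-up-run $\S$ first appears, and by Lemma~\ref{lem:fracturing}(\ref{lem:fracturing:WellDefined}) the strategy never stalls. Writing $F$ for the number of fractures performed (each fracture increments $i$ and creates exactly one new wastebin, which contains at least its spike), the whole argument reduces to two things: an upper bound on $|\C|$ in terms of $F$, and the accounting that forces at least $w$ points to lie outside $\S$.

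First I would establish the key bound on the central up-run. Since $\C = \B_1 \cupdot \cdots \cupdot \B_{F} \cupdot \C_{F+1}$, I combine the uniform estimate $|\B_j| \le z_j + (w-j)$ (the weaker of Lemma~\ref{lem:fracturing}(\ref{lem:fracturing:BUB1}) and (\ref{lem:fracturing:BUB2}), which always applies since every fracture has index $j\le F < w$) with $|\C_{F+1}| \le 2z_{F+1}-1$ from Lemma~\ref{lem:fracturing}(\ref{lem:fracturing:CUB}). When $F \le w-1$ this gives, after checking the resulting expression is monotone in $F$ and evaluating at $F=w-1$,
\[
|\C| \;\le\; \sum_{j=1}^{w-1}\big(z_j + (w-j)\big) + \big(2z_w - 1\big).
\]
The point is that this telescopes: with $z_j = \binom{w-j+1}{2}+1$ one gets $\sum_{j=1}^{w-1} z_j = \binom{w+1}{3} + (w-1)$ and $\sum_{j=1}^{w-1}(w-j) = \binom{w}{2}$, so the right-hand side is $\binom{w+1}{3} + \binom{w+1}{2} = \binom{w+2}{3} = \tfrac16\big((w+1)^3-(w+1)\big)$. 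The hypothesis $(w+1)^3 - (w+1) < 6m$ says precisely that this is strictly less than $m$. Hence $F \le w-1$ forces $|\C| < m$.

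With the crux in hand, the conclusion splits according to where $\S$ lives. If $\S \subseteq \C$, then $|\C| \ge m$, so the contrapositive of the crux gives $F \ge w$; as each fracture contributes a wastebin point, $\sum_j|\W_j| \ge F \ge w$ and the total is at least $m+w$. If $\S$ uses at least one wastebin point, then at least one fracture occurred, and a fracture only becomes available once $|\C_1| \ge z_1 = \binom{w}{2}+1 \ge w$; since $|\C|$ is nondecreasing, $|\C| \ge w$ for the rest of the game. In the purely wastebin case $\S \subseteq \bigcup_j\W_j$ we then have $\sum_j|\W_j| \ge m$ and $|\C|\ge w$, so the total is again at least $m+w$. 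The remaining (``crossing'') case, where $\S$ uses both central and wastebin points, is handled by the charging scheme below.

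The step I expect to be the main obstacle is exactly this bookkeeping: making rigorous that, in every case, at least $w$ points lie outside the terminal run $\S$. The clean mechanism is a charging argument showing that for each fracture $j$ at least one point of $\{q_j\}\cupdot\B_j$ lies outside $\S$, with the charges to distinct fractures landing on distinct points, so that the number of points outside $\S$ is at least $F$; this closes the argument whenever $F \ge w$. The delicate point is the crossing run, where one must rule out a short increasing sequence that threads through both $\C$ and the wastebins and thereby uses many points while leaving few outside. Here I would invoke the geometry forced by the $c,d$-clamping in the definitions of $h^{\uparrow n}$ and $h_{\downarrow n}$ (the same device used for Lemma~\ref{lem:fracturing}(\ref{lem:fracturing:No3down})), namely that a wastebin spike $q_j$ is incompatible in any single up-run with the banked run $\B_j$ it cuts off. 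Pinning down this incompatibility precisely, and confirming the charges are to distinct points, is where the real effort goes; the telescoping arithmetic above is then routine.
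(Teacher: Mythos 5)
Your treatment of the two extreme cases is sound: the telescoping computation showing that $F\le w-1$ forces $|\C|<m$ is exactly the paper's Case~2 (if the terminal run $\U$ equals $\C$, then assuming $i=w$ gives $|\C|\le\sum_{j=1}^{w-1}(z_j+w-j)+2z_w-1=\frac16\left[(w+1)^3-(w+1)\right]<m$, a contradiction, so at least $w$ nonempty wastebins exist), and your observation that the first fracture cannot occur until $|\C_1|\ge z_1\ge w$ correctly disposes of the purely-wastebin case. The problem is the crossing case, which you explicitly leave open, and the charging scheme you sketch for it cannot work as stated: charging ``at least one point of $\{q_j\}\cup\B_j$ outside the run per fracture'' yields only $F$ excluded points, which, as you note, only closes the argument when $F\ge w$ --- but the crossing case arises precisely when $\U$ threads through the central run and a wastebin after possibly very few fractures (e.g.\ $F=1$, with $\U$ consisting of $m-1$ central points and one wastebin point). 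Your crux bound does not help there, since it only rules out $\U\subseteq\C$ when $F\le w-1$.

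The missing idea is quantitative rather than a matter of ``pinning down incompatibility.'' If $q\in\W_j$, then by construction $q$ received its $y$-value from $h^{\uparrow(w-j+1)}$ or $h_{\downarrow(w-j+1)}$ (and $\varphi$ is never redefined on $W_j$), so $q$ sits at least $w-j+1$ notches off the central run; hence $|\NW(q)\cup\SE(q)|\ge w-j+1$, and each of these central points forms a $2$-down-run with $q$ and is therefore excluded from any up-run containing $q$. A single wastebin point of $\U$ thus excludes $w-j+1$ points at once. The remaining $j-1$ exclusions come from picking, for each earlier wastebin $\W_\alpha$ with $\alpha\in[j-1]$, a point $q_\alpha\in\W_\alpha$ together with its nearest neighbor $p_\alpha\in\NW(q_\alpha)\cup\SE(q_\alpha)$: at most one of each pair lies in $\U$, the $p_\alpha$ are distinct (the wastebin intervals are separated by points of $\C$), and they avoid $\NW(q)\cup\SE(q)$. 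This gives $(w-j+1)+(j-1)=w$ points outside $\U$ whenever $\U$ meets any wastebin, covering your ``purely wastebin'' and ``crossing'' cases uniformly; without this count your proof does not go through.
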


\begin{proof}
Suppose we are at the end of such a game.
Then by Lemma~\ref{lem:fracturing}(\ref{lem:fracturing:No3down}), there is some $m$-up-run $\U$.
Either $\U$ intersects some wastebin set $\W_j$, or else $\U = \C$.

\textbf{Case 1:}  $\U$ intersects $\W_j$ for some $j \in [i-1]$.
Let $q_j \in \U \cap \W_j$.
None of the points in $\NW(q_j) \cup \SE(q_j) \subseteq \C$ can be in $\U$; by virtue of $q_j$ being added to $\W_j$, we know that $|\NW(q_j) \cup \SE(q_j)| \ge w-j+1$.
Now, for each $\alpha \in [j-1]$, select some $q_\alpha \in \W_\alpha$ and let $p_\alpha = \argmin_{p \in \NW(q_\alpha) \cup \SE(q_\alpha)} |x(p) - x(q_\alpha)|$.
From each pair $\{q_\alpha, p_\alpha\}$, at most one point can be in $\U$.
Observe that for each pair of sets $\W_\alpha, \W_{\alpha'}$ with $\alpha, \alpha' \in [i-1]$, there is a point $\hat{p} \in \C$ such that $x(q) < x(\hat{p}) < x(q')$ or $x(q') < x(\hat{p}) < x(q)$ for all $q \in W_\alpha$ and $q' \in W_{\alpha'}$.
Hence $p_1, \dots, p_{j-1}$ are distinct points in $\C$ (not necessarily ordered by $x$-values), and none of them are in $\NW(q_j) \cup \SE(q_j)$.
Thus there are at least $j-1$ points in $\{q_1,p_1, \dots, q_{j-1},p_{j-1} \}$ that are not in $\U$.
Counting the $m$ points in $\U$ and at least $(w-j+1)+(j-1)$ points we know not to be in $\U$, we have at least $m+w$ points total.

\textbf{Case 2:}  $\U = \C$.
Since the wastebin sets $\W_j$ have been defined for all $j \in [i-1]$, are nonempty, and do not intersect $\C$, we show that $i-1 \ge w$.
Since in this case the game cannot be over until $m = |\C| = \left|\left(\bigcup_{j \in [i-1]} \B_j \right) \cup \C_i\right|$, we suppose $i=w$ and show that $\left|\left(\bigcup_{j \in [w-1]} \B_j \right) \cup \C_w\right| < m$.
This implies $i > w$.
By Lemma~\ref{lem:fracturing}(\ref{lem:fracturing:BUB2}) we have $|\B_j| \le z_j + w  -j$ for all $j \in [w-1]$.
Also, by Lemma~\ref{lem:fracturing}(\ref{lem:fracturing:CUB}) we have that $|\C_w| \le 2z_w -1$.
Hence $\left|\left(\bigcup_{j \in [w-1]} \B_j \right) \cup \C_w\right| \le \sum_{j=1}^{w-1} (z_j + w - j) + 2z_w -1 = \frac16 \left[(w+1)^3 -(w+1)\right] <m$.
\end{proof}

Applying $w = \lfloor \sqrt[3]{6m} \rfloor -1$ to Proposition~\ref{prop:lowerbound} yields 
\[
\ESO(m,3) \ge m + \lfloor \sqrt[3]{6m} \rfloor -1 > m + \sqrt[3]{6m} -2.
\]

\section{Concluding Remarks}\label{sec:conclusion}

As already shown in Section~\ref{sec:mkLB}, one may imagine variations on the Erd\H{o}s-Szekeres on-line game.
For example, the game $B_{m,k}$ could be generalized so that player B plays from the set $[s]$.
Another variation in which player B plays from $[s]$ would restrict the game to playing $s$ points total and player B cannot repeat previous choices from $[s]$.
What is the maximum length up-run or down-run that player A can force in the $s$ turns?

Alternatively, one could consider additional target patterns.
For example, consider the $(s,t)$-$\V$ configuration:
a point $p$ which is the left-most point of an $s$-up-run and a $t$-down-run, or the right-most point of  an $s$-up-run and a $t$-down-run.
What is the minimum number of moves in which player A can force an $m$-up-run, a $k$-down run, or an $(s,t)$-$\V$?
When $s$ and $t$ are relatively smaller than $m$ and $k$, respectively, then this additional target pattern is likely to disrupt strategies which would be optimal for player B in a game of $A_{m,k}$.

Another possibility is to generalize to more than two dimensions for the board space.
In $R^n$, say that two points $(a_1, \dots, a_n)$ and $(b_1, \dots, b_n)$ are increasing if $a_i \le b_i$ for all $i \in [n]$ or $b_i \le a_i$ for all $i \in [n]$, and say that they are decreasing otherwise.
Let a set of $m$ pairwise increasing points be called a chain and a set of $k$ pairwise decreasing points be called a $k$-anti-chain.
For $s<n$, let player A choose the first $s$ coordinates of a point, and then player B choose the last $n-s$ coordinates of the point.
In how many turns can player A force an $m$-chain or a $k$-anti-chain?

Variations of the Erd\H{o}s-Szekeres on-line game might not have symmetry.
For example, $B(m,k)$ and $B(k,m)$ are not necessarily equal.
While $B(m,2) = m$, Proposition~\ref{prop:BgameUB} implies $B(2,m) \le \lfloor \frac{m}{2}\rfloor +1$.

Nevertheless, we believe that $\ESO(m,k)$ and $B(m,k)$ are very closely related, encapsulated in the following conjecture, which is stronger than Conjecture~\ref{con} mentioned in the introduction.
\begin{conjecture}
For all $m$ and $k$, $|\ESO(m,k)-B(m,k)|=o(mk)$.
\end{conjecture}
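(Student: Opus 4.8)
The plan is to prove the two one-sided estimates separately. The inequality $\ESO(m,k)\ge B(m,k)$ is already established in Section~\ref{sec:mkLB} (player~B may ignore the tier restriction and play a winning $B_{m,k}$ strategy inside $A_{m,k}$), so the entire content lies in the reverse bound
\[
\ESO(m,k)\le B(m,k)+o(mk).
\]
I would stress at the outset that this cannot be obtained by computing each quantity and subtracting: Propositions~\ref{prop:BgameUB} and~\ref{prop:BgameLB2} pin down $B(m,k)$ only to within $O(k^2)$, and the best available control on $\ESO(m,k)$ has an error of the same order, so when $k=\Theta(m)$ these error terms are themselves of size $\Theta(mk)$. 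A direct strategy-transfer (coupling) argument is therefore needed, in which a near-optimal player-A strategy for $B_{m,k}$ is simulated inside $A_{m,k}$ at a cost of only $o(mk)$ extra moves.

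For the simulation, fix a player-A strategy $\sigma$ that forces an $m$-up-run in $B(m,k)$ moves. Playing $A_{m,k}$, player~A maintains at each turn a decomposition of the current point set into up-runs: as long as no $k$-down-run has appeared, Dilworth's theorem guarantees a cover by at most $k-1$ up-runs, which I treat as \emph{surrogate tiers}. Player~A then makes the $x$-move that $\sigma$ prescribes for the corresponding configuration of $k-1$ tiers. Player~B's reply falls into one of two types: either the new point extends one of the surrogate up-runs—a ``tier-respecting'' move, after which the $B_{m,k}$ analysis of $\sigma$ applies verbatim—or it does not, in which case the new point is a \emph{deviation} that either lengthens the longest down-run or forces the minimum up-run cover to grow.

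The crux is an amortized accounting of the deviations. I would track a potential combining the current longest-down-run length with the number of accumulated deviation points, and argue that each deviation either advances the longest down-run toward $k$ (at most $k-1$ such advances can occur before player~A wins outright) or can be absorbed by re-splitting the surrogate cover at the cost of a bounded number of extra $x$-moves. The target estimate is that player~B can make only $o(mk)$ deviations before one of the two winning patterns appears, so the number of moves exceeds the idealized $B(m,k)$ count by at most $o(mk)$.

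The main obstacle is that a minimum up-run cover is neither canonical nor stable: because player~A may insert a point between two existing columns, a single new point can force a global restructuring of the surrogate tiers, so the simulation of $\sigma$ is only approximate and the per-deviation cost is hard to bound by a constant. Forcing the deviation count to be genuinely $o(mk)$ rather than $O(mk)$ is precisely the difficulty in the regime $k=\Theta(m)$. In the complementary regime $k=o(m)$ the conjecture instead reduces to supplying a player-A upper bound $\ESO(m,k)\le\lfloor\tfrac{k}{2}\rfloor m+O(k^2)$—the upper-bound half of Conjecture~\ref{con}—after which the chain $\ESO(m,k)\ge B(m,k)\ge\lfloor\tfrac{k}{2}\rfloor m-O(k^2)$ from Proposition~\ref{prop:BgameLB2} gives $|\ESO(m,k)-B(m,k)|=O(k^2)=o(mk)$. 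The genuine content is thus concentrated at $k=\Theta(m)$, where the second-order terms of $\ESO$ and $B$ must be shown to coincide to within $o(mk)$, and I expect controlling the instability of the surrogate-tier decomposition to be the decisive technical hurdle.
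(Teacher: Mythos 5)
This statement is Conjecture~2 of the paper: it is left open, and the paper contains no proof of it, so there is nothing to compare your argument against except its own internal soundness. On that score, what you have written is a research plan rather than a proof, and the plan contains holes you yourself flag but do not fill. The lower bound $\ESO(m,k)\ge B(m,k)$ is indeed the paper's observation from Section~\ref{sec:mkLB}, but everything on the other side is unestablished. The central step --- that player B can make only $o(mk)$ ``deviations'' before a winning pattern appears --- is asserted as a ``target estimate'' with no potential function actually defined, no per-deviation cost bounded, and no argument ruling out that B deviates $\Theta(mk)$ times. Worse, the simulation itself is not well defined: in $B_{m,k}$ the tiers are globally ordered horizontal levels that never change, and player A's strategy $\sigma$ reads off a configuration relative to that fixed structure; a minimum up-run cover of an $A_{m,k}$ position has no canonical choice and no stability (as you note, one new point can force a global re-splitting), so ``the $x$-move that $\sigma$ prescribes for the corresponding configuration of $k-1$ tiers'' does not identify a move, and the claim that the $B_{m,k}$ analysis ``applies verbatim'' after a tier-respecting reply has no basis once the cover has been reshuffled even once.

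Your fallback in the regime $k=o(m)$ is also circular as a proof: it invokes the upper bound $\ESO(m,k)\le\lfloor\frac{k}{2}\rfloor m+O(k^2)$, which is exactly the open half of Conjecture~\ref{con}; the paper's best general upper bound is Theorem~\ref{thm:ESO<ES}, $\ESO(m,k)\le(m-1)(k-1)$, which is off by roughly a factor of $2$ and gives only $|\ESO(m,k)-B(m,k)|=O(mk)$. So the proposal reduces one open conjecture to another open conjecture plus an unproved amortization scheme. One further caution: $B(m,k)$ is not symmetric in its arguments (the paper notes $B(2,m)\le\lfloor\frac{m}{2}\rfloor+1$ while $\ESO(2,m)=m$), so any precise formulation of your two regimes must respect the convention $m\ge k$; your ``complementary regime'' analysis silently assumes this. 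Your diagnosis of where the difficulty lives --- the instability of surrogate-tier decompositions when $k=\Theta(m)$ --- is a sensible reading of the problem, but identifying the obstacle is not the same as overcoming it, and as written no part of the upper-bound direction is proved.
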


\section*{Acknowledgments}
This collaboration began as part of the 2016 Rocky Mountain--Great Plains Graduate Research Workshop in Combinatorics, supported in part by NSF-DMS Grants \#1604458, \#1604773, \#1604697 and \#1603823.
The authors are grateful for the workshop, which provided the opportunity for fruitful conversations.
The authors thank Gavin King, a workshop participant, for posing the ``Online Happy Ending Problem'' which inspired this project.

\bibliographystyle{abbrv}
\bibliography{ESO}

\newpage
\section*{Appendix}

\noindent \textbf{Example 1}

Now we give an example of how a game of $A_{m,3}$ might proceed when player A uses the $f$-Middling strategy mode, with $f = (3,2,1,0,0,0,\dots)$.
In the figures that follow, the labels of each point indicate the order in which they were played.

Until player B deviates from $\S$, we have $\t=0$ and the points look something as follows:

\begin{tikzpicture}

	\draw (0,0) rectangle (12,10);
	\node at (1,9.5) {\LARGE $\t = 0$};
    
\begin{scope}[every node/.style={circle,thick,draw,shade,shading=axis,left color=red,right color=red,shading angle=90}]
    \node (1) at (2,1) {};
    \node (4) at (3,2) {};
    \node (5) at (4,3) {};
    \node (7) at (5,4) {};
    \node (9) at (6,5) {};
    \node (8) at (7,6) {};
    \node (6) at (8,7) {};
    \node (3) at (9,8) {};
    \node (2) at (10,9) {};
\end{scope}

	\node at ([shift={(120:.5)}]1) {\LARGE 1};
	\node at ([shift={(-60:.5)}]2) {\LARGE 2};
	\node at ([shift={(-60:.5)}]3) {\LARGE 3};
	\node at ([shift={(120:.5)}]4) {\LARGE 4};
	\node at ([shift={(120:.5)}]5) {\LARGE 5};
	\node at ([shift={(-60:.5)}]6) {\LARGE 6};
	\node at ([shift={(120:.5)}]7) {\LARGE 7};
	\node at ([shift={(-60:.5)}]8) {\LARGE 8};
	\node at ([shift={(120:.5)}]9) {\LARGE 9};

\end{tikzpicture}\\
At this point we have $(a_x,b_x) \times (a_y,b_y) = (0,1) \times (0,1)$, $\S = \{1,2,3,4,5,6,7,8,9\}$, and $\N = \W = \varnothing$.

Suppose that on the next turn, player B deviates from $\S$ to form the point 10:

\begin{tikzpicture}

	\draw (0,0) rectangle (12,10);
	\node at (2,9.5) {\LARGE $\t = 1$, $f_\t = 3$};
	\node at (9.5,1.8) {\Large Active segment:};
	\node at (9.5,1) {\Large $\big( 0,x(9) \big) \times \big( 0,y(9) \big)$};
    
\begin{scope}[every node/.style={circle,thick,draw,shade,shading=axis,left color=red,right color=red,shading angle=90}]
    \node (1) at (2,1) {};
    \node (4) at (3,2) {};
    \node (5) at (4,3) {};
    \node (7) at (5,4) {};
    \node (9) at (6,5) {};
    \node (10) at (6.5,4.5) {};
    \node (8) at (7,6) {};
    \node (6) at (8,7) {};
    \node (3) at (9,8) {};
    \node (2) at (10,9) {};
\end{scope}

	\draw (0,0) rectangle (9);

	\node at ([shift={(120:.5)}]1) {\LARGE 1};
	\node at ([shift={(-60:.5)}]2) {\LARGE 2};
	\node at ([shift={(-60:.5)}]3) {\LARGE 3};
	\node at ([shift={(120:.5)}]4) {\LARGE 4};
	\node at ([shift={(120:.5)}]5) {\LARGE 5};
	\node at ([shift={(-60:.5)}]6) {\LARGE 6};
	\node at ([shift={(120:.5)}]7) {\LARGE 7};
	\node at ([shift={(-60:.5)}]8) {\LARGE 8};
	\node at ([shift={(120:.5)}]9) {\LARGE 9};
	\node at ([shift={(0:.6)}]10) {\LARGE 10};

\begin{scope}[-,
              every edge/.style={draw=orange,very thick}]
    \path [-] (6.5,0) edge node {} (6.5,10);
\end{scope}

\end{tikzpicture}\\
Now increment $\t$ to 1; since $|\SW(10)\cap\S| > f_\t$ and $|\NE(10)\cap\S| > f_\t$, player A keeps playing the $f$-Middling strategy mode.
At this point we have $(a_x,b_x) \times (a_y,b_y) = (0,x(9)) \times (0,y(9))$, $\S = \{1,4,5,7\}$, $\N = \{9,8,6,3,2\}$, and $\W = \{10\}$.

Player A keeps playing in a middlemost column of $\S$ until player B deviates again, say on turn 16:

\begin{tikzpicture}

	\draw (0,0) rectangle (12,10);
	\node at (2,9.5) {\LARGE $\t = 2$, $f_\t = 2$};
	\node at (6.35,1.8) {\Large Active segment:};
	\node at (6.35,1) {\Large $\big( x(13),x(9) \big) \times \big( y(13),y(9) \big)$};
    
\begin{scope}[every node/.style={circle,thick,draw,shade,shading=axis,left color=red,right color=red,shading angle=90}]
    \node (1) at (0.5,1) {};
    \node (4) at (1.25,1.5) {};
    \node (11) at (2,2) {};
    \node (14) at (3,3) {};
    \node (16) at (3.2,5.5) {};
    \node (15) at (4,4) {};
    \node (13) at (5,5) {};
    \node (12) at (6,6) {};
    \node (5) at (7.25,6.75) {};
    \node (7) at (8.5,7.5) {};
    
    \node (9) at (9.5,8.5) {};
    \node (10) at (9.75,8) {};
    \node (8) at (10,8.75) {};
    \node (6) at (10.5,9) {};
    \node (3) at (11,9.25) {};
    \node (2) at (11.5,9.5) {};
\end{scope}

	\draw (0,0) rectangle (9);
	\draw (13) rectangle (9);

	\node at ([shift={(120:.5)}]1) {\LARGE 1};
	\node at ([shift={(-60:.4)}]2) {\large 2};
	\node at ([shift={(-60:.4)}]3) {\large 3};
	\node at ([shift={(120:.5)}]4) {\LARGE 4};
	\node at ([shift={(-60:.5)}]5) {\LARGE 5};
	\node at ([shift={(-60:.4)}]6) {\large 6};
	\node at ([shift={(-60:.5)}]7) {\LARGE 7};
	\node at ([shift={(-60:.4)}]8) {\large 8};
	\node at ([shift={(120:.4)}]9) {\large 9};
	\node at ([shift={(-30:.4)}]10) {\large 10};
	\node at ([shift={(120:.6)}]11) {\LARGE 11};
	\node at ([shift={(-30:.6)}]12) {\LARGE 12};
	\node at ([shift={(-30:.6)}]13) {\LARGE 13};
	\node at ([shift={(120:.6)}]14) {\LARGE 14};
	\node at ([shift={(-30:.6)}]15) {\LARGE 15};
	\node at ([shift={(120:.6)}]16) {\LARGE 16};

\begin{scope}[-,
              every edge/.style={draw=orange,very thick}]
    \path [-] (3.2,0) edge node {} (3.2,8.5);
\end{scope}

\end{tikzpicture}\\
Now increment $\t$ to 2; since $|\SW(16)\cap\S| > f_\t$ and $|\NE(16)\cap\S| > f_\t$, player A keeps playing the $f$-Middling strategy mode.
At this point we have $(a_x,b_x) \times (a_y,b_y) = (x(13),x(9)) \times (y(13),y(9))$, $\S = \{5,7,12\}$, $\N = \{9,8,6,3,2,1,4,11,14,15,13\}$, and $\W = \{10,16\}$.

Player A keeps playing in a middlemost column of $\S$ until player B deviates again, say on turn 19:

\begin{tikzpicture}

	\draw (0,0) rectangle (12,10);
	\node at (2,9.5) {\LARGE $\t = 3$, $f_\t = 1$};
	\node at (6,1) {(\Large Exit strategy mode.)};
    
\begin{scope}[every node/.style={circle,thick,draw,shade,shading=axis,left color=red,right color=red,shading angle=90}]
    \node (1) at (0.5,0.5) {};
    \node (4) at (1,1) {};
    \node (11) at (1.5,1.5) {};
    \node (14) at (2,2) {};
    \node (16) at (2.25,3.5) {};
    \node (15) at (2.5,2.5) {};
    \node (13) at (3,3) {};
    
    \node (12) at (4,4) {};
    \node (5) at (5,4.8) {};
    \node (18) at (6,5.6) {};
    \node (19) at (6.75,7) {};
    \node (17) at (7.5,6.4) {};
    \node (7) at (8.5,7.5) {};
    
    \node (9) at (9.5,8.5) {};
    \node (10) at (9.75,8) {};
    \node (8) at (10,8.75) {};
    \node (6) at (10.5,9) {};
    \node (3) at (11,9.25) {};
    \node (2) at (11.5,9.5) {};
\end{scope}

	\draw (13) rectangle (9);

	\node at ([shift={(120:.4)}]1) {\large 1};
	\node at ([shift={(-60:.4)}]2) {\large 2};
	\node at ([shift={(-60:.4)}]3) {\large 3};
	\node at ([shift={(120:.4)}]4) {\large 4};
	\node at ([shift={(-60:.5)}]5) {\LARGE 5};
	\node at ([shift={(-60:.4)}]6) {\large 6};
	\node at ([shift={(-60:.5)}]7) {\LARGE 7};
	\node at ([shift={(-60:.4)}]8) {\large 8};
	\node at ([shift={(120:.4)}]9) {\large 9};
	\node at ([shift={(-30:.4)}]10) {\large 10};
	\node at ([shift={(120:.4)}]11) {\large 11};
	\node at ([shift={(-30:.6)}]12) {\LARGE 12};
	\node at ([shift={(-30:.4)}]13) {\large 13};
	\node at ([shift={(120:.4)}]14) {\large 14};
	\node at ([shift={(-30:.4)}]15) {\large 15};
	\node at ([shift={(120:.4)}]16) {\large 16};
	\node at ([shift={(-30:.6)}]17) {\LARGE 17};
	\node at ([shift={(-300:.6)}]18) {\LARGE 18};
	\node at ([shift={(120:.6)}]19) {\LARGE 19};

\begin{scope}[-,
              every edge/.style={draw=orange,very thick}]
    \path [-] (6.75,3) edge node {} (6.75,8.5);
\end{scope}

\end{tikzpicture}\\
Now increment $\t$ to 3; since $|\NE(19)\cap\S| \le f_\t$, player A now exits the $f$-Middling strategy mode.
At this point we have $(a_y,b_y) = (x(13),x(9)) \times (y(13),y(9))$, $\S = \{5,7,12,17\}$, $\N = \{9,8,6,3,2,1,4,11,14,15,13\}$, and $\W = \{10,16\}$.

\vspace*{2cm}

\noindent \textbf{Example 2}

Now we illustrate how part of a game of $A_{m,3}$ might proceed when player A uses the $w$-Barb strategy mode, with $w = 4$.

Suppose the game is underway with the following point set.
Then player A chooses the column between $r_1$ and $\hat{r}_1$:

\begin{tikzpicture}

	\draw (0,0) rectangle (12,10);
    
\begin{scope}[every node/.style={circle,thick,draw,shade,shading=axis,left color=red,right color=red,shading angle=90}]
    \node (1) at (1,1) {};
    \node (2) at (1.5,2) {};
    \node (3) at (2,3) {};
    \node (4) at (2.5,4) {};
    \node (5) at (3,5) {};
    \node (6) at (3.7,6) {};
    \node (7) at (4.5,6.7) {};
    
    \node (8) at (6,7.5) {};
    \node (9) at (7,7.8) {};
    \node (10) at (8,8.1) {};
    \node (11) at (9,8.4) {};
    \node (12) at (10,8.7) {};
    \node (13) at (11,9) {};
    \node (q1) at (8.5,7.1) {};
\end{scope}

	\node at ([shift={(90:.5)}]8) {\LARGE $r_1$};
	\node at ([shift={(90:.5)}]9) {\LARGE $\hat{r}_1$};
	\node at ([shift={(0:.5)}]q1) {\LARGE $q_1$};
	
	\node at (11,8) {\LARGE $\B_1$};
	\node at (2,5) {\LARGE $\U$};

\begin{scope}[-,
              every edge/.style={draw=orange,very thick}]
    \path [-] (6.5,0) edge node {} (6.5,10);
\end{scope}

\begin{scope}[-,
              every edge/.style={draw=gray,very thick}]
    \path [-] (q1) edge node {} (8);
    \path [-] (8) edge node {} (9);
    \path [-] (9) edge node {} (10);
    \path [-] (10) edge node {} (11);
    \path [-] (11) edge node {} (12);
    \path [-] (12) edge node {} (13);
    
    \path [-] (1) edge node {} (2);
    \path [-] (2) edge node {} (3);
    \path [-] (3) edge node {} (4);
    \path [-] (4) edge node {} (5);
    \path [-] (5) edge node {} (6);
    \path [-] (6) edge node {} (7);
\end{scope}

\end{tikzpicture}\\
Then suppose player B chooses a $y$-value that results in the point $q$ which is 3 notches below $r_1$ with respect to $\U$.
Since $3 \ge w-1$, we go to Step~\ref{Bstep:stepdown} and then return to Step~\ref{Bstep:play} with the following point set.
Then player A chooses the column between $r_2$ and $\hat{r}_2$:

\begin{tikzpicture}

	\draw (0,0) rectangle (12,10);
    
\begin{scope}[every node/.style={circle,thick,draw,shade,shading=axis,left color=red,right color=red,shading angle=90}]
    \node (1) at (1,1) {};
    \node (2) at (1.5,2) {};
    \node (3) at (2,3) {};
    \node (4) at (2.5,4) {};
    
    \node (5) at (3,5) {};
    \node (6) at (3.7,6) {};
    \node (7) at (4.5,6.7) {};
    
    \node (8) at (6,7.5) {};
    \node (9) at (7,7.8) {};
    \node (10) at (8,8.1) {};
    \node (11) at (9,8.4) {};
    \node (12) at (10,8.7) {};
    \node (13) at (11,9) {};
    \node (q1) at (8.5,7.1) {};
    \node (q2) at (6.5,4.5) {};
\end{scope}

	\node at ([shift={(90:.5)}]5) {\LARGE $r_2$};
	\node at ([shift={(90:.5)}]6) {\LARGE $\hat{r}_2$};
	\node at ([shift={(0:.5)}]q1) {\LARGE $q_1$};
	\node at ([shift={(0:.5)}]q2) {\LARGE $q_2$};
	
	\node at (6,6) {\LARGE $\B_2$};
	\node at (1,5) {\LARGE $\U$};

\begin{scope}[-,
              every edge/.style={draw=orange,very thick}]
    \path [-] (3.35,0) edge node {} (3.35,10);
\end{scope}

\begin{scope}[-,
              every edge/.style={draw=gray,very thick}]
    \path [-] (q2) edge node {} (5);
    \path [-] (5) edge node {} (6);
    \path [-] (6) edge node {} (7);
    \path [-] (7) edge node {} (8);
    \path [-] (8) edge node {} (9);
    \path [-] (9) edge node {} (10);
    \path [-] (10) edge node {} (11);
    \path [-] (11) edge node {} (12);
    \path [-] (12) edge node {} (13);
    
\end{scope}

\end{tikzpicture}\\
Then suppose player B chooses a $y$-value that results in the point $q$ which is 1 notch below $r_2$ with respect to $\U$.
Since $1 < w-2$, we go to Step~\ref{Bstep:playbarb} with the following point set:

\begin{tikzpicture}

	\draw (0,0) rectangle (12,10);
    
\begin{scope}[every node/.style={circle,thick,draw,shade,shading=axis,left color=red,right color=red,shading angle=90}]
    \node (1) at (1,1) {};
    \node (2) at (1.5,2) {};
    \node (3) at (2,3) {};
    \node (4) at (2.5,4) {};
    
    \node (5) at (3,5) {};
    \node (6) at (3.7,6) {};
    \node (7) at (4.5,6.7) {};
    
    \node (8) at (6,7.5) {};
    \node (9) at (7,7.8) {};
    \node (10) at (8,8.1) {};
    \node (11) at (9,8.4) {};
    \node (12) at (10,8.7) {};
    \node (13) at (11,9) {};
    \node (q1) at (8.5,7.1) {};
    
    \node (q2) at (6.5,4.5) {};
    
    \node (q) at (3.35,3.5) {};
\end{scope}

	\node at ([shift={(90:.5)}]5) {\LARGE $r_2$};
	\node at ([shift={(90:.5)}]6) {\LARGE $\hat{r}_2$};
	\node at ([shift={(0:.5)}]q1) {\LARGE $q_1$};
	\node at ([shift={(0:.5)}]q2) {\LARGE $q_2$};
	\node at ([shift={(-20:.5)}]q) {\LARGE $q$};
	\node at ([shift={(150:.5)}]4) {\LARGE $p$};
	
	\node at (6,3) {\LARGE $\B$};

\begin{scope}[-,
              every edge/.style={draw=gray,very thick}]
    \path [-] (1) edge node {} (2);
    \path [-] (2) edge node {} (3);
    \path [-] (3) edge node {} (q); 
    \path [-] (q) edge node {} (q2);
    \path [-] (q) edge node {} (6);              
              
    \path [-] (q2) edge node {} (5);
    \path [-] (5) edge node {} (6);
    \path [-] (6) edge node {} (7);
    \path [-] (7) edge node {} (8);
    \path [-] (8) edge node {} (9);
    \path [-] (9) edge node {} (10);
    \path [-] (10) edge node {} (11);
    \path [-] (11) edge node {} (12);
    \path [-] (12) edge node {} (13);
\end{scope}

\end{tikzpicture}\\
Then player A treats $p$ and $q_1$ as a loss and plays the barb $\B$ by choosing the column between $q$ and $\hat{r}_2$.

\end{document}